\newcommand{\set}[1]{\left\{ #1\right\}}
\newcommand{\abs}[1]{\left| #1\right|}
\newcommand{\bR}{\displaystyle \mathbb{R}}
\newcommand{\bF}{\displaystyle \mathbb{F}}
\newcommand{\bP}{\displaystyle \mathbb{P}}
\newcommand{\bN}{\displaystyle \mathbb{N}}
\newcommand{\bZ}{\displaystyle \mathbb{Z}}
\newtheorem{theorem}{Theorem}[section]
\newtheorem{lemma}[theorem]{Lemma}
\newtheorem{prop}[theorem]{Proposition}
\newtheorem{remark}[theorem]{Remark}
\newtheorem{cor}[theorem]{Corollary}
\newtheorem{conj}[theorem]{Conjecture}
\newtheorem*{conj*}{Conjecture}
\newtheorem{que}[theorem]{Question}
\newtheorem{observation}[theorem]{Observation}
\begin{document}
	\title{	Indistinguishable sceneries on the Boolean hypercube}
	\author{Renan Gross\footnote{Weizmann Institute of Science. renan.gross@weizmann.ac.il}
		\;and Uri Grupel\footnote{Weizmann Institute of Science. uri.grupel@weizmann.ac.il. Supported by the European Research Council (ERC).}}
	\date{}
	\maketitle
	%-----------------------------------------------------------------------
	
	\begin{abstract}
		We show that the scenery reconstruction problem on the Boolean hypercube is in general impossible. This is done by using locally biased functions, in which every vertex has a constant fraction of neighbors colored by $1$, and locally stable functions, in which every vertex has a constant fraction of neighbors colored by its own color. Our methods are constructive, and also give super-polynomial lower bounds on the number of locally biased and locally stable functions. We further show similar results for $\bZ^n$ and other graphs, and offer several follow-up questions.
	\end{abstract}
	
	%-----------------------------------------------------------------------

	\section{Introduction}
	Let $f : \set{-1,1}^{n} \rightarrow \set{-1,1}$ be a Boolean function on the $n$-dimensional hypercube, and let $S_i$ be a random walk on the hypercube. Can we reconstruct the function $f$ (with probability 1, up to the hypercube's symmetries) by only observing the scenery process $\set{f(S_i)}_{i}$?
	
	Similar questions have been raised for other graphs. For example, it was shown in~\cite{benjamini+kesten96} that when $G$ is a cycle graph, the answer is yes: it is possible to reconstruct the function $f$ (which is a string up to choice of origin) up to rotation and reflection with probability $1$. It is still an open question whether any such string can be reconstructed in polynomial time. When $G=\bZ$, reconstruction is generally impossible \cite{lindenstrauss99}; for random sceneries on $\bZ$ see~\cite{matzinger_rolles03}.
	
	When $G$ is the hypercube, such a process was studied for a specific Boolean function, the {\em percolation} crossing, under the notion of dynamical percolation; see~\cite{garban+steif15} for details. 
	
	In the general case, however, we show that for $n\geq 4$ the answer is no. We do this by considering a pair of non-isomorphic functions $f$ and $g$ such that if $S_i$ and $T_i$ are random walks on the hypercube, then $f(S_i)$ and $g(T_i)$ have exactly the same distribution. We discuss two different classes of such functions:
	
	\begin{itemize}
		\item{\textbf{Locally $p$-biased functions:}} Let $G$ be a graph. A Boolean function $f: G \rightarrow \set{-1,1}$ is called \emph{locally $p$-biased}, if for every vertex $x\in G$ we have
		$$\frac{\abs{\{y\sim x;\;f(y) = 1\}}}{deg(x)}=p.$$
		In words, $f$ is locally $p$-biased if for every vertex $x$, $f$ takes the value $1$ on exactly a $p$-fraction of $x$'s neighbors. If $f$ is a locally $p$-biased function, then the random variables $\{f(S_i)\}_i$ have the same distribution as independent Bernoulli random variables with $\bP(f(S_i)=1)=p$. 
			
		\item{\textbf{Locally $p$-stable functions:}} Let $G$ be a graph. A Boolean function $f: G \rightarrow \set{-1,1}$ is called \emph{locally $p$-stable}, if for every vertex $x\in G$ we have
		$$\frac{\abs{\{y\sim x;\;f(x) = f(y)\}}}{deg(x)}=p.$$
		In words, $f$ is locally $p$-stable if for every vertex $x$, $f$ retains its value on exactly a $p$-fraction of $x$'s neighbors. If $f$ is locally $p$-stable, then the random variables $\{f(S_i)f(S_{i+1})\}_{i}$ have the same distribution as independent Bernoulli random variables with $\bP(f(S_i)f(S_{i+1})=1)=p$.
	\end{itemize}
	
	We say that two Boolean functions $f,g:\{-1,1\}^n\to\{-1,1\}$ are \emph{isomorphic}, if there exists an automorphism of the hypercube $\psi:\{-1,1\}^n\to\{-1,1\}^n$ such that $f\circ\psi = g$. Two functions are \emph{non-isomorphic} if no such $\psi$ exists.
		
	The existence of two non-isomorphic locally $p$-biased functions, or two non-isomorphic locally $p$-stable functions thus render scenery reconstruction on the hypercube impossible. 
	
	It is not immediately obvious that pairs of non-isomorphic locally $p$-biased and pairs of non-isomorphic locally $p$-stable functions exist. It is then natural to ask, for which $p$ values do they exist? If they do exist, how many of them are there?
		
	In this paper, we characterize the possible $p$ values on the $n$-dimensional hypercube, give bounds on the number of non-isomorphic pairs, and discuss results on other graphs. The paper is organized as follows.
	
	In \S~\ref{sec:existence} we give a full characterization of the connection between the dimension of the hypercube $n$ and the permissible $p$ values of locally $p$-biased functions, as expressed in the following theorem:
	
	\begin{theorem}\label{thm:main_theorem}
		Let $n \in \bN$ be a natural number and $p \in [0,1]$. There exists a locally $p$-biased function $f:\set{-1,1}^{n} \rightarrow \set{-1,1}$ if and only if $p = b/2^{k}$ for some integers $b \geq 0, k \geq 0$, and $2^{k}$ divides $n$. 
	\end{theorem}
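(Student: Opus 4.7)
The plan is to prove the two directions separately. Necessity comes from a counting argument at the level of $|f^{-1}(1)|$, and sufficiency from an explicit construction that decomposes into (i) a block-parity lift from $\{-1,1\}^{2^k}$ to $\{-1,1\}^n$, and (ii) a code-theoretic construction on $\{-1,1\}^{2^k}$.

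For necessity, I would set $A := f^{-1}(1)$ and double-count pairs $(v,w)$ with $v \sim w$ and $w \in A$: this gives $|A| \cdot n = pn \cdot 2^n$, so $|A| = p \cdot 2^n$ must be a non-negative integer. Since $pn$ is also an integer (the common number of $1$-neighbors), writing $n = 2^k m$ with $m$ odd and $p = a/n$, integrality of $a \cdot 2^n / n$ forces $m \mid a$, and hence $p = (a/m)/2^k = b/2^k$ with $2^k \mid n$, as desired.

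For sufficiency, the core tool I plan to use is a Fourier characterization. The hypercube's adjacency operator $\mathcal{L}$ has eigenvalues $n - 2|S|$ on the characters $\chi_S(x) = \prod_{i \in S} x_i$, so the defining equation $\mathcal{L}\mathbf{1}_A \equiv pn$ is equivalent to $\widehat{\mathbf{1}_A}$ being supported on $\{\emptyset\} \cup \{S \subseteq [n] : |S| = n/2\}$ with $\widehat{\mathbf{1}_A}(\emptyset) = p$. I would then reduce to the case $n = 2^k$ by a block-parity lift: partition $[n]$ into $2^k$ blocks $B_1, \dots, B_{2^k}$ of size $m = n/2^k$, define $\pi(x) := (\prod_{i \in B_1} x_i, \dots, \prod_{i \in B_{2^k}} x_i)$, and push a locally $p$-biased $g : \{-1,1\}^{2^k} \to \{-1,1\}$ up via $f(x) := g(\pi(x))$. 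A coordinate flip inside $B_j$ flips exactly the $j$-th coordinate of $\pi(x)$, so the number of $1$-neighbors of $f$ at $x$ is $m$ times that of $g$ at $\pi(x)$, and local $p$-biasedness transfers unchanged.

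It remains to produce a locally $(b/2^k)$-biased function on $\{-1,1\}^{2^k}$ for every $b \in \{0,\dots,2^k\}$. My plan is to index the $2^k$ coordinates by $\mathbb{F}_2^k$ and consider the $k$-dimensional linear code $C := \{(a \cdot x)_{x \in \mathbb{F}_2^k} : a \in \mathbb{F}_2^k\} \subseteq \mathbb{F}_2^{2^k}$; each of its nonzero codewords is the indicator of an affine hyperplane in $\mathbb{F}_2^k$ and therefore has weight exactly $2^{k-1} = n/2$. Setting $L_0 := C^{\perp}$, any union $A$ of $b$ cosets of $L_0$ satisfies $|A|/2^n = b/2^k$, and $\widehat{\mathbf{1}_A}$ is supported on $L_0^{\perp} = C$, which lies precisely in the levels $\{\emptyset\} \cup \{|S| = n/2\}$ demanded by the Fourier characterization. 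The main obstacle I anticipate is this last step: locating a constant-weight (nonzero) linear code of exactly the right dimension so that its $2^k$ cosets realize every multiple of $1/2^k$. The code above is the natural candidate precisely because its dimension $k$ makes the number of cosets equal $n = 2^k$.
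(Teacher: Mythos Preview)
Your proposal is correct. The necessity argument and the block-parity lift from $\{-1,1\}^{2^k}$ to $\{-1,1\}^n$ are essentially identical to the paper's (the lift is the paper's Lemma~\ref{lemma:simulating_p}). Where you genuinely diverge is in the base construction on $\{-1,1\}^{2^k}$. The paper works combinatorially: it takes a Hamming (perfect, distance-$3$) code on $2^k-1$ bits, extends by one free coordinate to obtain a locally $1/n$-biased function, and then overlays $b$ disjoint translates of that code to reach bias $b/2^k$. You instead use the spectral characterization $\mathcal{L}\mathbf{1}_A\equiv pn \iff \operatorname{supp}\widehat{\mathbf{1}_A}\subseteq\{\emptyset\}\cup\{|S|=n/2\}$ and realize $A$ as a union of $b$ cosets of $L_0=C^\perp$, where $C=\{(a\cdot x)_{x\in\mathbb{F}_2^k}:a\in\mathbb{F}_2^k\}$ is the length-$2^k$ simplex/Hadamard code whose nonzero words all have weight $2^{k-1}$. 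This is clean and handles every $b\in\{0,\dots,2^k\}$ uniformly in one stroke. The two routes are closer than they look: your $L_0$ is exactly the extended Hamming code together with its complements, so your cosets are the same objects the paper is translating. What the paper's more hands-on version buys is a direct pipeline to the later counting results: identifying the $1/n$-biased functions with perfect codes lets them import lower bounds on the number of non-isomorphic perfect codes (Proposition~\ref{prop:non_isomorphic_1_n_functions}). Your Fourier/coset picture, on the other hand, makes the existence proof shorter and more conceptual, and dovetails nicely with the paper's own observation that locally $1/2$-biased functions have all their Fourier weight at level $n/2$.
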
	
	
	Our proof can construct functions for all $p$ of the above form.
	
	In \S~\ref{sec:non_unique} we inspect the class size of non-isomorphic locally $p$-biased functions on the hypercube. We show that the class size for $p=1/2$ is at least $C2^{\sqrt{n}}/n^{1/4}$ for some constant $C > 0$, and for $p=1/n$ is super-exponential in $n$, when such $p$ values are permissible. Thus reconstruction is impossible for such functions. We conjecture that the number of non-isomorphic locally $p$-biased functions scales quickly for all permissible $p$ values:
	\begin{conj}
		Let $n>0$ be even. Let $p=b/2^k$, where $1\leq b\leq 2^k$, $k\geq 1$ and $2^k$ divides $n$. Let $B_p^n$ be the set of non-isomorphic locally $p$-biased functions. Then $\abs{B_p^n}$ is super-exponential in $n$.
	\end{conj}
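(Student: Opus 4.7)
The plan is to combine two ingredients: a super-exponential lower bound for the sparsest permissible case $p=1/2^k$, and a disjoint-union construction that lifts this bound to arbitrary $p=b/2^k$.

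For $b=1$ the conjecture directly asks for super-exponentially many non-isomorphic locally $1/2^k$-biased functions. The general case $b\geq 2$ then reduces to this via disjoint union: given locally $1/2^k$-biased functions $f_1,\dots,f_b \colon \{-1,1\}^n \to \{-1,1\}$ with pairwise disjoint $1$-sets, the function $F$ defined by $F(y)=1$ iff $f_j(y)=1$ for some $j$ is locally $b/2^k$-biased, since at every vertex $x$,
\[
\left|\{y\sim x : F(y)=1\}\right| \;=\; \sum_{j=1}^b \left|\{y\sim x : f_j(y)=1\}\right| \;=\; b\cdot \frac{n}{2^k}.
\]
The linear-algebraic construction behind Theorem~\ref{thm:main_theorem} naturally produces $2^k$ mutually disjoint pieces simultaneously, as the level sets of a balanced linear map $L\colon \bF_2^n \to \bF_2^k$. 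A careful accounting of the resulting orbits under the hypercube automorphism group should then transfer the $b=1$ lower bound to all $b\leq 2^k$.

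The main task is therefore to produce super-exponentially many non-isomorphic locally $1/2^k$-biased functions for each $k\geq 1$ with $2^k\mid n$. The sparsest subcase $p=1/n$, corresponding to $k=\log_2 n$, is already handled in the paper via perfect codes. For intermediate $k$ the proposed approach is to paste independent locally-biased patches on disjoint blocks of coordinates and glue them together using the equitable coset structure of $\bF_2^k$. The number of such pasted functions should grow roughly like $2^{\Omega(n\log n)}$, while the hypercube automorphism group $S_n\ltimes(\bZ/2)^n$ has size only $2^n\cdot n! = 2^{O(n\log n)}$; a stabiliser calculation showing that a generic pasted function has only polynomial-size stabiliser would then yield super-exponentially many orbits.

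The main obstacle lies in the base case $k=1$: upgrading the super-polynomial lower bound $C\cdot 2^{\sqrt n}/n^{1/4}$ for $p=1/2$ from Section~\ref{sec:non_unique} to a super-exponential one. All $p=1/2$ constructions currently in the paper are highly symmetric and therefore produce relatively few non-isomorphic representatives. A super-exponential bound appears to demand a deliberately asymmetric family --- for instance, Boolean functions encoded by $\{0,1\}$-matrices with row and column balance constraints --- together with an argument that a generic member of the family has polynomial-size stabiliser. Establishing this genericity, as opposed to merely exhibiting new symmetric examples, is where we expect the bulk of the difficulty to concentrate.
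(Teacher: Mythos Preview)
The statement you are attempting to prove is explicitly labelled a \emph{conjecture} in the paper and is left open there; the paper offers no proof of it. What the paper does establish are the partial results you cite: a super-exponential lower bound for $p=1/n$ via perfect codes, and only a \emph{super-polynomial} bound $C\,2^{\sqrt n}/n^{1/4}$ for $p=1/2$. So there is no ``paper's own proof'' to compare your attempt against.

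Your write-up is accordingly not a proof but a research outline, and you correctly isolate the hard step yourself: for $k=1$ (i.e.\ $p=1/2$) one must upgrade the existing sub-exponential bound to a super-exponential one, and you note that all current constructions are too symmetric to do this. That is exactly why the statement is a conjecture. Two further gaps deserve mention. First, your reduction from general $b$ to $b=1$ is not obviously orbit-injective: the map $(f_1,\dots,f_b)\mapsto F$ can collapse many non-isomorphic $f_1$'s to isomorphic $F$'s, so ``a careful accounting of the resulting orbits'' is doing real work that you have not supplied. Second, in the intermediate-$k$ step your counting sketch pits a lower bound of $2^{\Omega(n\log n)}$ constructions against an automorphism group of size $2^{O(n\log n)}$; since both exponents are $\Theta(n\log n)$, you get nothing without either explicit constants or the promised stabiliser bound, which is again left as a hope rather than an argument.

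In short: your diagnosis of where the difficulty lies matches the paper's, but the conjecture remains open and your proposal does not close it.
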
	
	
	In \S~\ref{sec:locally_stable_functions} we briefly discuss locally $p$-stable functions. We show that they exist for all possible $p$ values, and that for most $p$ values there are many non-isomorphic pairs; however, for every $n$, there are $p$ values for which there is a single unique locally $p$-stable function. The results in this section are based on those of \S~\ref{sec:non_unique}.
	
	In \S~\ref{sec:open_questions} we discuss other graphs. First, we show that when $G$ is a regular tree of degree $n$, then all $p=a/n$ are permissible. Second, we show that for $G=\bZ^n$ all the results for the hypercubes hold true. This gives us a partial answer for permissible $p$ values for $\bZ^n$, but there are additional values that cannot be achieved through the hypercube construction: for example, for $n=1$ we can define a function with $p=1/2$ and when $n=2$ we can find a function with $p=1/4$. We also discuss other Cayley graphs of $\bZ$, and suggest further questions on scenerey reconstruction.
	
	Throughout most of this paper we treat the Boolean hypercube as the set $\set{-1,1}^n$. We identify it with the $\set{0,1}^n$ hypercube by considering $-1$ in the first to correspond to $0$ in the second.		
	
	%-----------------------------------------------------------------------
	
	\section{Characterization of permissible $p$ values}\label{sec:existence}
	In this section we prove Theorem \ref{thm:main_theorem}. The ``only if'' part is achieved by a double counting argument.
	\begin{proof}[Proof (of the ``only if'' statement of Theorem \ref{thm:main_theorem})]
		Suppose that $f:\set{-1,1}^{n} \rightarrow \set{-1,1}$ is a locally $p$-biased Boolean function. Let $x$ be a uniformly random element of $\set{-1,1}^{n}$. Then $f(x)$ is $f$'s value on a uniformly random point of the hypercube, and is equal to $1$ with probability $l/2^{n}$, where $l = \abs{\set{x \in \set{-1,1}^n ;\; f(x) = 1}}$ is the number of vertices on which $f$ obtains the value $1$. Now let $y$ be a uniformly random neighbor of $x$. The function $f$ is locally $p$-biased, so probability that $f(y) = 1$ is $p$ by definition. Since both $x$ and $y$ are uniform random vertices, $\bP(f(x) = 1) = \bP(f(y) = 1)$. Denoting $p = m/n$ for some $m \in \set{0,1,\ldots,n}$, this gives
		\begin{equation}
		p = \frac{l}{2^{n}} = \frac{m}{n}. \label{eq:double_counting}
		\end{equation}
		Decompose $n$ into its prime powers, writing $n = c2^{k}$, where $c$ is odd. Then by (\ref{eq:double_counting}), we have that
		
		$$
		l = \frac{2^{n-k} \cdot m}{c}
		$$
		is an integer, and so $c$ must divide $m$, i.e $m = bc$ for some $b$. But then
		
		$$ p = \frac{m}{n} = \frac{b}{2^{k}} $$
		as stated by the theorem.
	\end{proof}
	
	The ``if'' part of Theorem \ref{thm:main_theorem} is given by an explicit construction, performed in three steps. First, we use perfect codes in order to obtain a locally $1/n$-biased function for $n$ that is a power of two. Second, we extend the result to a locally $m/n$-biased function by taking the union of $m$ locally $1/n$-biased functions with disjoint support. Finally, given a locally $p$-biased function on $n$ bits, we show how to manipulate its Fourier representation in order to yield a locally $p$-biased function on $cn$ bits for any $c$.

	We begin with a brief review of binary codes. We omit proofs and simply state definitions and known results; for a more thorough introduction, see e.g \cite{lint1975, lint1998}.
	
	A binary code $C$ on the $n$-dimensional hypercube is simply a subset of $\set{-1,1}^n$; its elements are called \emph{codewords}. The \emph{distance} of a code $C$ is defined as $\min_{x,y \in C} \delta_H(x,y)$, where $\delta_H(x,y)  = \abs{\set{i \in \set{1,\ldots,n}; x_i \neq y_i}}$ is the Hamming distance between $x$ and $y$, that is, the number of coordinates in which $x$ and $y$ differ. A code of odd distance $d$ is called \emph{perfect} if the Hamming balls of radius $(d-1)/2$ around each codeword completely tile the hypercube without overlaps. A code is called \emph{linear} if its codewords form a vector space over $\bF_2$.
	
	A particularly interesting code is the Hamming code with $k$ data bits, denoted $H_k$. It is a linear, distance-$3$ perfect code on the hypercube of dimension $n = 2^k -1$. Its codewords are structured as follows. For $x \in H_k$ and $i \in \set{1,\ldots,n}$, the bit $x_i$ is called a \emph{parity bit} if $i$ is a power of $2$, and \emph{data bit} otherwise. Thus every codeword contains $k$ parity bits and $2^k - k -1$ data bits. The data bits range over all 
	possible bit-strings on $2^k -k -1$ bits, while the parity bits are a function of the data bits: 	
	$$x_i = \bigoplus_{j: i \land j \neq 0} x_j \qquad  \forall i = 2^l, l\geq0$$ 
	where $\oplus$ denotes exclusive bitwise or (xor), and $\land$ denotes bitwise AND. In words, the parity bit $x_i$ is equal to the xor of all data bits $x_j$ such that the bitwise AND between $i$ and $j$ is non-zero. Thus there are $2^k - k - 1$ codewords in $H_k$. 
		
	Armed with perfect codes, we are ready to start our proof.
	
	\begin{lemma}\label{lemma:1_over_n_power_of_2}
		Let $n = 2^{k}$ be a power of two. Then there exists a locally $1/n$-biased function on $\set{-1,1}^{n}$.
	\end{lemma}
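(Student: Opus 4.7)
The plan is to lift the Hamming code $H_k$ from dimension $2^k - 1 = n-1$ up to dimension $n$ by adding a ``dummy'' coordinate that $f$ ignores. Concretely, I would define $f:\{-1,1\}^n \to \{-1,1\}$ by
\[
f(x_1, \ldots, x_{n-1}, x_n) = 1 \quad \Longleftrightarrow \quad (x_1, \ldots, x_{n-1}) \in H_k,
\]
and $f = -1$ otherwise. Note that the size count already looks promising: $|H_k| = 2^{2^k - k - 1}$, so $|f^{-1}(1)| = 2 |H_k| = 2^{n}/n$, as required for $p = 1/n$.

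The verification rests entirely on the perfect-code property of $H_k$: every vertex $y \in \{-1,1\}^{n-1}$ is either itself a codeword (in which case its $n-1$ neighbors in $\{-1,1\}^{n-1}$ are all non-codewords, since $H_k$ has distance $3$), or it lies at Hamming distance exactly $1$ from a unique codeword. I would split the verification for $x = (y, x_n)$ into these two cases. If $y \in H_k$, then flipping the $n$-th coordinate gives the only $1$-valued neighbor (since all the other $n-1$ neighbors have prefix at distance $1$ from $y$, hence outside $H_k$ by the distance-$3$ property). If $y \notin H_k$, then flipping the $n$-th coordinate keeps the prefix outside $H_k$, while among the other $n-1$ coordinate flips exactly one produces a prefix lying in $H_k$, by perfectness. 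In both cases exactly one of the $n$ neighbors carries $f$-value $1$, proving that $f$ is locally $1/n$-biased.

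There is essentially no obstacle here beyond invoking the known combinatorial structure of $H_k$; the only thing to be careful about is the dimension mismatch between the Hamming code (length $2^k - 1$) and the hypercube we want to work with (length $2^k$), which is precisely what the added dummy coordinate $x_n$ resolves. This dummy coordinate is what accounts for the ``$+1$'' neighbor in Case~1 and the ``blocked'' $n$-th flip in Case~2, giving a uniform count of exactly one $1$-valued neighbor across the whole hypercube.
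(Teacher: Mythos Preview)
Your proposal is correct and is essentially the same as the paper's proof: both lift a distance-$3$ perfect code on $\{-1,1\}^{n-1}$ (the Hamming code) to $\{-1,1\}^n$ by adjoining a dummy coordinate, and verify the locally $1/n$-biased property via the same two-case split according to whether the prefix is a codeword. The only cosmetic difference is that the paper phrases the construction as $f^{-1}(1)=C\times\{-1,1\}$ and allows $C$ to be any distance-$3$ perfect code, while you name $H_k$ explicitly.
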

	
	\begin{proof}
		In a locally $1/n$-biased function $f$, every point in the hypercube must have exactly 1 neighbor which is given the value 1, and $n-1$ neighbors which are given the value -1.
		
		Let $C$ be a distance-$3$ perfect code on the $n-1 = 2^{k} - 1$-dimensional hypercube. That is, every two code words in $C$ are at a Hamming distance of at least $3$ from each other, and the Hamming balls of radius $1$ centered around each codeword completely tile the hypercube. Such codes exist for dimension $2^{k}-1$; for example, as mentioned above and shown in \cite{lint1975}, the Hamming code is such a code. Define $f:\set{-1,1}^n \rightarrow \set{-1,1}$ to be the following function:
		
		$$ f(x) = \begin{cases}
		1, & x\in C\times\{-1,1\} \\
		-1, & \text{otherwise.}\\
		\end{cases} $$
		In words, $f(x)$ takes the value of $1$ whenever the first $n-1$ coordinates of $x$ are a codeword in $C$, and otherwise takes the value of $-1$. Then $f$ is a locally $1/n$-biased function:
		\begin{list}{-}{}
			\item 
			If $f(x) = 1$, then $x=(y,b)\in C\times\{-1,1\}$. Thus $x' = (y,-b)$ is the only neighbor of $x$ on which $f(x') = 1$; any other neighbor differs from $x$ in the first $n-1$ coordinates, and since $C$ is a distance-$3$ code, these coordinates are not a codeword in $C$.
			\item 
			If  $f(x) = -1$, then $x=(y,b)$ where $b\in\{-1,1\}$ and $y$ is not a codeword of $C$.
			Since $C$ is perfect, $y$ must fall inside some radius-$1$ ball of a codeword $z$. Then $x' = (z,b)$ is the only neighbor of $x$ such that $f(x') = 1$; any other codeword differs from $z$ in at least $3$ coordinates since $C$ is a distance-$3$ code, and so differs from $y$ in at least $2$.
		\end{list}
	\end{proof}
	
	\begin{lemma}\label{lemma:m_over_n_power_of_2}
		Let $n = 2^{k}$ be a power of two. Then there exists a locally $m/n$-biased function on $\set{-1,1}^{n}$ for any $m = 0, 1, \ldots, n.$
	\end{lemma}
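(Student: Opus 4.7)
The plan is to build the desired locally $m/n$-biased function as a ``sum'' of $m$ disjoint copies of the locally $1/n$-biased construction from Lemma~\ref{lemma:1_over_n_power_of_2}. The key observation is that the Hamming code $H_k$ is a linear subspace of $\bF_2^{n-1}$ of codimension $k$, so its $n=2^k$ cosets partition $\bF_2^{n-1}\cong\set{-1,1}^{n-1}$ into $n$ disjoint sets, each of size $2^{n-1}/n$. Since the Hamming distance is translation invariant, every coset $C_i = v_i + H_k$ is itself a distance-$3$ perfect code on $\set{-1,1}^{n-1}$.

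Given $1 \leq m \leq n-1$, pick any $m$ distinct cosets $C_1,\ldots,C_m$ and define
$$
f(x) = \begin{cases} 1, & x \in \bigcup_{i=1}^m (C_i \times \set{-1,1}), \\ -1, & \text{otherwise.} \end{cases}
$$
The cases $m=0$ and $m=n$ are handled by the constant functions $f\equiv -1$ and $f\equiv 1$ respectively (for $m=n$ one can also take the union of all $n$ cosets, which by the partition property covers the entire hypercube).

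To verify local bias, fix any $x \in \set{-1,1}^n$. Applying the argument of Lemma~\ref{lemma:1_over_n_power_of_2} to each coset $C_i$ separately (which works verbatim because $C_i$ is a distance-$3$ perfect code), the vertex $x$ has exactly one neighbor in $C_i \times \set{-1,1}$. Crucially, the sets $C_i \times \set{-1,1}$ are pairwise disjoint because the cosets $C_i$ are disjoint. Hence the total number of neighbors of $x$ at which $f$ equals $1$ is exactly $m$, so $f$ is locally $m/n$-biased.

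There is no real obstacle here once the coset picture is in place; the only thing to check is that cosets of a linear perfect code are themselves perfect codes of the same distance, which is immediate from translation invariance of the Hamming metric. A minor bookkeeping point is that the total measure $m\cdot 2^n/n$ of $f^{-1}(1)$ never exceeds $2^n$, which corresponds to the natural constraint $m\leq n$.
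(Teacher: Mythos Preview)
Your proof is correct and follows essentially the same approach as the paper: both take $m$ translates of the Hamming code on $n-1$ bits, note that each translate is still a distance-$3$ perfect code, and use their disjointness to conclude that every vertex has exactly $m$ neighbors in the union. The only difference is presentational---the paper constructs the translates explicitly by xoring the parity bits with $0,1,\ldots,n-1$ and verifies disjointness by hand, whereas you invoke the coset partition of a codimension-$k$ linear code directly, which is a bit cleaner.
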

	
	\begin{proof}
		For $m = 0$ the statement is trivial. Let $m \in \set{1,\ldots,n}$. In order to construct a locally $m/n$-biased function, it is enough to find $m$ locally $1/n$-biased functions $f_{1}, \ldots, f_{m}$ with pairwise disjoint support, i.e $\set{x : f_{i}(x) = 1} \cap \set{x : f_{j}(x) = 1} = \emptyset$ for all $i \neq j$. With these functions, we can define $f$ in the following manner:
		
		$$ f(x) = \begin{cases}
		1, & f_i(x) = 1 \text{ for some $i$}\\
		-1, & \text{otherwise.}\\
		\end{cases} $$
		Then $f$ is a locally $m/n$-biased function: For every $x \in \set{1,-1}^n$, consider its neighbors on which $f$ takes the value $1$, i.e the set $\set{y ;\; d(x,y) = 1, \exists i \ s.t \ f_{i}(y) = 1}$. Each $f_{i}$ contributes exactly one element to this set, since it is a locally	$1/n$-biased function; further, these elements are all distinct, since the $f_{i}$'s have pairwise disjoint supports. So $x$ has $m$ neighbors on which $f$ takes the value 1.
		
		Recall that the Hamming code on $2^{k}-1$ bits uses $2^{k}-k-1$ data bits (these range over all possible bit-strings on $2^{k}-k-1$ bits) and $k$ parity bits (these are a function of the data bits). Let $C$ be the Hamming code on $2^{k}-1$ bits, and rearrange the order of the bits so that the parity bits are all on the right hand side of the codeword, i.e each codeword $x$ can be written as $x = (y,z)$ where $y$ is a word of length $2^{k}-k-1$ constituting the data bits and $z$ is a word of length $k$ constituting the parity bits.
		
		Now, for all $1 \leq i \leq n$, define the sets $C_{i} = \set{x \oplus (i-1) ;\; x \in C}$, where $\oplus$ denotes the exclusive or (xor) operator. Then the sets $C_i$ are all pairwise disjoint: in order for two words $x = (y,z) \in C_{i}$ and $x' = (y',z') \in C_{j}$ to be the same,  we need to have both $y = y'$ and $z = z'$. But if $y = y'$ then the data bits are the same, and by construction $z \oplus z' = (i-1) \oplus (j-1)$, so $z \neq z'$ if $i \neq j$. Further, since xoring by a constant only amounts to a rotation of the hypercube, each $C_{i}$ is still a perfect code.
		
		Let $f_{i}$ be the function which uses $C_{i}$ as its perfect code in the proof of Lemma~\ref{lemma:1_over_n_power_of_2}. Then, $f_{1}, \ldots, f_{n}$ are $n$ locally $1/n$-biased functions  with pairwise disjoint supports. The combination of any $m$ of these functions yields a locally $m/n$-biased function.
		
	\end{proof}

	\begin{lemma}\label{lemma:simulating_p}
		Let $f:\set{-1,1}^{n} \rightarrow \set{-1,1}$ be a locally $p$-biased function on the $n$-dimensional hypercube.
		Let $c \in \bN$, and define a new function $f':\set{-1,1}^{cn} \rightarrow \set{-1,1}$ by
		\begin{eqnarray}
		f'(x) & = &  f\left(\prod_{j=0}^{c-1}x_{1+jn},\ldots,\prod_{j=0}^{c-1}x_{n+jn}\right).
		\end{eqnarray}
		Then $f'$ is a locally $p$-biased function. 
		
	\end{lemma}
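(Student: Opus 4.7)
The plan is to define the block-product map $\pi : \set{-1,1}^{cn} \to \set{-1,1}^{n}$ by $\pi(x)_i = \prod_{j=0}^{c-1} x_{i+jn}$, so that $f' = f \circ \pi$, and then to read off the local bias of $f'$ from that of $f$ by comparing neighborhoods. The core combinatorial fact is the following: if $i \in \set{1,\ldots,n}$ and $j \in \set{0,\ldots,c-1}$, then flipping the $(i+jn)$-th coordinate of $x$ flips precisely the $i$-th coordinate of $\pi(x)$ and leaves every other coordinate of $\pi(x)$ fixed. This holds because the index sets $T_i = \set{i,\, i+n,\, \ldots,\, i+(c-1)n}$ partition $\set{1,\ldots,cn}$ and $\pi(x)_i$ depends only on the bits indexed by $T_i$.

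Given this observation, the conclusion follows by a direct count. Write $y := \pi(x)$. The $cn$ neighbors of $x$ in $\set{-1,1}^{cn}$ are mapped by $\pi$ onto the $n$ neighbors of $y$ in $\set{-1,1}^{n}$, each hit exactly $c$ times (once per choice of $j \in \set{0,\ldots,c-1}$). Since $f$ is locally $p$-biased, exactly $pn$ of the neighbors of $y$ satisfy $f = 1$; pulling back, exactly $c \cdot pn = p \cdot (cn)$ of the neighbors of $x$ satisfy $f' = f \circ \pi = 1$. Since $x$ was arbitrary, $f'$ is locally $p$-biased on $\set{-1,1}^{cn}$.

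There is no real obstacle in this argument: it reduces to the elementary fact that negating one factor in a product of $\pm 1$ variables negates the whole product. An alternative route, consistent with the Fourier framing advertised in the introduction, is to first characterize locally $p$-biased functions on $\set{-1,1}^N$ by $\hat{h}(\emptyset) = 2p-1$ together with $\hat{h}(S) = 0$ for every $\abs{S} \notin \set{0, N/2}$ (obtained by applying the neighborhood-sum identity $\sum_{y\sim x} \chi_S(y) = (N-2\abs{S})\chi_S(x)$), and then to observe that each monomial $\chi_S$ appearing in $f$ becomes $\chi_{\bigcup_{i \in S} T_i}$ in $f'$, a character whose support has size $c\abs{S} \in \set{0, cn/2}$. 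Either approach yields the lemma without difficulty.
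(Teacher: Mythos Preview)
Your main argument is correct and is essentially the paper's own proof: both observe that flipping coordinate $i+jn$ of $x$ flips exactly coordinate $i$ of $y=\pi(x)$ (disjoint blocks $T_i$), and then count neighbors; the only cosmetic difference is that the paper phrases the count as two ``at least'' bounds (for the values $1$ and $-1$) which are then forced to be equalities, whereas you go directly via the $c$-to-$1$ surjection onto the neighborhood of $y$. Your Fourier alternative is also valid and not in the paper; the characterization $\hat h(\emptyset)=2p-1$, $\hat h(S)=0$ for $\abs{S}\notin\{0,N/2\}$ is exactly the condition $\sum_{y\sim x} h(y)=(2p-1)N$, and the substitution $y_i\mapsto\prod_{j}x_{i+jn}$ indeed sends each level-$n/2$ character to a level-$cn/2$ character.
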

	
	\begin{proof}
		Let $x' \in \set{-1,1}^{cn}$ be a point on the $cn$-dimensional hypercube, and let $y \in \set{-1,1}^{n}$ be such that $y_{i} = x'_{i} \cdot \ldots \cdot x'_{(c-1)n+i}$. Then by definition, $ f'(x') = f(y)$. Since $f$ is a locally $p$-biased function, $y$ has $pn$ neighbors on which $f$ takes the value $1$. Each of these neighbors is obtained from $y$ by flipping a single coordinate $y_{i}$; this amounts to changing any one of the $c$ coordinates of $x'$ which make up the $y_{i}$. Since the $y_{i}$'s are disjoint monomials in the coordinates of $x'$, this implies that there are at least $pcn$ neighbors of $x'$ on which $f'$ takes the value $1$. 
		
		The same argument can be repeated for the value $-1$ instead of $1$, showing that there are at least $(1-p)cn$ neighbors of $x'$ on which $f'$ takes the value $-1$. But since the number of neighbors of $x'$ is $nc$, the inequalities are in fact equalities. Hence, there are exactly $pcn$ neighbors of $x'$ on which $f'$ takes the value $1$, completing the proof.
	\end{proof}
	
	We are now ready to prove that the condition on $p$ is sufficient in Theorem~\ref{thm:main_theorem}.
	
	\begin{proof}[Proof (of the ``if'' statement of Theorem \ref{thm:main_theorem})]
		All that is left is to stitch the above lemmas together: Let $n = c2^{k}$. Using Lemma \ref{lemma:m_over_n_power_of_2}, create a locally $p$-biased function $g:\set{-1,1}^{2^k} \rightarrow \set{-1,1}$ on $2^{k}$ variables; then, using Lemma \ref{lemma:simulating_p}, extend it to a function $f$ on $n$ variables.
	\end{proof}

	%-----------------------------------------------------------------------
	
	\section{Non-isomorphic functions}\label{sec:non_unique} %Uri
	In this section we discuss the classes of non-isomorphic locally $p$-biased function.
	We show that for the hypercube of dimension $n$, the growth rate with respect to $n$ is at least $\Omega\left(2^{\sqrt{n}}/\sqrt{n}\right)$ for $p=1/2$ and super-exponential for $p=1/n$, when such $p$'s are permissible. We conjecture that for any permissible $p$ the growth rate is super-polynomial.	
	
	The proof for $p=1/2$ is based on an explicit construction of non-isomorphic locally $1/2$-biased functions.
	In order to define these functions we use the following simple proposition.
	\begin{prop}\label{prop:product}
		Let $f_i:\{-1,1\}^{n_i}\to \{-1,1\}$ be locally $1/2$-biased functions for $i=1,2$ where $n_1+n_2=n$. Then
		$$f(x)=f_1(x_1,...,x_{n_1})f_2(x_{n_1+1},...,x_n)$$
		is a locally $1/2$-biased function on $\{-1,1\}^n$.
	\end{prop}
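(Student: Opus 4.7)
The plan is to prove this by a direct neighbor-counting argument that exploits the symmetry of the $p=1/2$ condition. Write every vertex of $\{-1,1\}^n$ as $x = (u,v)$ with $u \in \{-1,1\}^{n_1}$ and $v \in \{-1,1\}^{n_2}$, so that $f(x) = f_1(u) f_2(v)$. The $n$ neighbors of $x$ split naturally into two groups: the $n_1$ neighbors $(u',v)$ where $u'$ is a single-coordinate flip of $u$, and the $n_2$ neighbors $(u,v')$ where $v'$ is a single-coordinate flip of $v$. I would handle each group separately and show each contributes exactly half of its members to the set where $f=1$.

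For the first group, note that $f(u',v) = f_1(u') f_2(v)$, so $f(u',v) = 1$ iff $f_1(u') = f_2(v)$. Regardless of whether $f_2(v) = 1$ or $f_2(v) = -1$, the number of neighbors $u'$ of $u$ in $\{-1,1\}^{n_1}$ satisfying $f_1(u') = f_2(v)$ is exactly $n_1/2$, since $f_1$ is locally $1/2$-biased and hence has precisely $n_1/2$ neighbors of value $1$ and precisely $n_1/2$ neighbors of value $-1$ at $u$. The same argument applied to $f_2$ shows that exactly $n_2/2$ of the neighbors in the second group satisfy $f(u,v') = 1$.

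Adding the two contributions gives $n_1/2 + n_2/2 = n/2$ neighbors of $x$ on which $f$ takes the value $1$, which is precisely the locally $1/2$-biased condition. No obstacle is expected here: the argument is essentially a one-line observation, and the only subtlety worth flagging is that the symmetric value $p=1/2$ is what makes the answer independent of the sign of $f_2(v)$ (respectively $f_1(u)$); for general $p$ the analogous identity would fail and the product would not preserve the local bias.
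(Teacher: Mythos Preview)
Your proof is correct and is essentially identical to the paper's: both split the neighbors of $x=(u,v)$ according to whether the flipped coordinate lies in the first $n_1$ or last $n_2$ coordinates, use the locally $1/2$-biased property of $f_1$ (resp.\ $f_2$) to count exactly $n_1/2$ (resp.\ $n_2/2$) contributions, and sum to $n/2$. Your closing remark about why $p=1/2$ is the only value for which this product construction works is a nice addition not present in the paper's version.
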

	
	\begin{proof}
		Let $x\in \{-1,1\}^n$, let $x'$ be a neighbor of $x$, and denote $f(x)=f_1(x_1,\ldots,x_{n_1})f_2(x_{n_1+1},\ldots,x_n)=y_{1}\cdot y_{2}$ and $f(x')=f_1(x'_{1},\ldots,x'_{n_1})f_2(x'_{n_1+1},\ldots,x'_{n})=y'_{1}\cdot y'_{2}$ . Then $x'$ differs from $x$ in either the first $n_{1}$ coordinates, or the last $n_{2}$ coordinates. If it differs in the first $n_{1}$ coordinates, then $y'_{2} = y_{2}$. Since $f_{1}$ is locally $1/2$-biased, there are exactly $n_{1}/2$ coordinate changes such that $y'_{1} = y'_{2}$, yielding $f(x') = 1$. Similarly, if $x'$ differs in the last $n_{2}$ coordinates, then $y'_{1} = y_{1}$, and there are exactly $n_{2}/2$ coordinate changes such that $y'_{2} = y'_{1}$, again yielding $f(x') = 1$. So overall, $x$ has exactly $n_1/2 + n_2/2 = n/2$ neighbors where $f$ is $1$.
	\end{proof}
	
	The above proposition allows us to construct examples for locally $1/2$-biased functions, by combinations of such functions on lower dimensions.\\
	
	We have two basic examples for locally $1/2$-biased functions:
	\begin{enumerate}
		\item
		In any even dimension $n$,
		$$g_n(x_1,\ldots,x_n)=x_1\cdots x_{n/2}.$$
		\item \label{eq:basic_function}
		In dimension $n=4$,
		$$h(x_1,x_2,x_3,x_4)=\frac{1}{2}\left(x_1x_2+x_2x_3-x_3x_4+x_1x_4\right).$$
	\end{enumerate}
	
	The Fourier decomposition of a Boolean function is its expansion as a real multilinear polynomial: any Boolean function $f:\set{-1,1}^n \rightarrow \bR$ can be written as a sum
	$$ f(x_1,\ldots,x_n) = \sum_{S\subseteq \set{1,\ldots,n}} \hat{f}_S \prod _{i\in S} x_i, $$
	where the $\hat{f}_S$ are real coefficients. Such a representation is unique; for a proof and other properties of the Fourier decomposition, see e.g chapter 1 in \cite{oDonnell2014}. 
	
	Automorphisms of the hypercube are manifested on the Fourier decomposition of a Boolean function by either permutation or by a sign change to a subset of indices. Hence, we can show that two Boolean functions are not isomorphic by showing that their Fourier decompositions cannot be mapped into one another by such permutations and sign changes.
	
	In this section, a tensor product of two functions $f(x_1, \ldots, x_n)$ and $g(x_1,\ldots, x_m)$ is a function on disjoint indices, i.e. 
	$$ h(x_1, \ldots, x_{n+m}) = f(x_1,\ldots,x_n)\cdot g(x_{n+1},\ldots, x_{n+m}).$$
	
	\begin{prop} \label{prop:half_biased}
		There exists $h_1,h_2,\ldots$ such that for any $k$ the function $h_k$ is locally $1/2$-biased on the $4k$-dimensional hypercube and $h_k$ is not isomorphic to any tensor product of $h_1,\ldots,h_{k-1},g_2,g_4,g_6,...$.
	\end{prop}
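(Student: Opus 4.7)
The plan is to take $h_k$ to be the function produced by applying Lemma~\ref{lemma:simulating_p} with $c=k$ to the base function $h$ of dimension four. Explicitly, partition $[4k]$ into four blocks $B_1,B_2,B_3,B_4$ of size $k$ each, set $y_i = \prod_{j\in B_i} x_j$, and define
\[
h_k(x) = h(y_1,y_2,y_3,y_4) = \tfrac{1}{2}\bigl(y_1 y_2 + y_2 y_3 - y_3 y_4 + y_1 y_4\bigr).
\]
Lemma~\ref{lemma:simulating_p} gives at once that $h_k$ is locally $1/2$-biased on $\{-1,1\}^{4k}$. Expanding the right-hand side and observing that each $y_i y_j$ is a monomial in $2k$ distinct $x$-variables, one reads off that $h_k$ has exactly four Fourier sets $S_i = B_i \cup B_{i+1\bmod 4}$ ($i=1,2,3,4$), each of cardinality $2k$ and coefficient $\pm\tfrac{1}{2}$.

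The first ingredient is that the multiset of pairwise Fourier-support intersection sizes $\{|S_i\cap S_j|\}_{i<j}$ is an invariant of the isomorphism class of the function: a coordinate permutation merely relabels the Fourier sets, while a sign-flip of $x_i$ only multiplies by $-1$ those coefficients whose Fourier set contains $i$, leaving the supports themselves intact. For $h_k$ this multiset is $\{k,k,k,k,0,0\}$, since cyclically adjacent pairs share a single block and opposite pairs are disjoint.

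The second ingredient is the Fourier structure of a tensor product: by distributivity, $\widehat{f_1\otimes\cdots\otimes f_r}(T_1\cup\cdots\cup T_r) = \prod_i \hat{f_i}(T_i)$ on disjoint variable blocks, so $|\mathrm{supp}(\widehat{f_1\otimes\cdots\otimes f_r})| = \prod_i |\mathrm{supp}(\hat{f_i})|$. Each $g_{2m}$ contributes one Fourier set, and by the same computation used for $h_k$, each previously constructed $h_j$ contributes exactly four. A short aside rules out locally $1/2$-biased Boolean functions with exactly two Fourier sets: for $f = a\chi_S + b\chi_T$ with $a,b\neq 0$ and $S\neq T$, expanding $f^2\equiv 1$ as $a^2+b^2+2ab\,\chi_{S\triangle T}\equiv 1$ forces $ab=0$. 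Therefore every tensor product from the allowed library has $4^d$ Fourier sets, where $d$ is the number of $h$-factors, and matching the four sets of $h_k$ forces $d=1$: a single $h_j$ with $j<k$ together with monomial $g$-factors whose combined tensor product is, up to variable permutation, $g_{4(k-j)}$.

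Finally, I would compute the intersection multiset of $h_j \otimes g_{4(k-j)}$. Its four Fourier sets have the form $C_i\cup C_{i+1\bmod 4}\cup M$, where $C_1,\ldots,C_4$ are the $h_j$-blocks of size $j$ and $M$ is the $g_{4(k-j)}$-monomial set of size $2(k-j)$; the pairwise intersections are either $C_{i+1}\cup M$ of size $2k-j$ (for adjacent pairs) or just $M$ of size $2(k-j)$ (for opposite pairs), yielding the multiset $\{2k-j,2k-j,2k-j,2k-j,2(k-j),2(k-j)\}$. Equating with $\{k,k,k,k,0,0\}$ gives $2k-j=k$ and $2(k-j)=0$, both forcing $j=k$ and contradicting $j<k$. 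The base case $k=1$ is immediate since no earlier $h_i$ is available, and every tensor product of $g$'s alone has only one Fourier set. The main obstacle is the isomorphism-invariance assertion of the second paragraph; once it is in place, the counting and intersection-multiset computations fit together mechanically.
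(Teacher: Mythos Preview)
Your proof is correct and follows the same overall route as the paper: identical construction of $h_k$, the same monomial-counting argument to reduce to a single factor $h_j$ (with $j<k$) tensored with parity, and then a Fourier-support comparison to finish. The only difference is in that last step: the paper observes the single qualitative feature that $h_k$ has a pair of disjoint Fourier sets while every monomial of $h_j\otimes g_{4(k-j)}$ contains the $g$-index $x_{4i+1}$, whereas you compute the full intersection-size multiset $\{k,k,k,k,0,0\}$ versus $\{2k-j,\ldots,2(k-j),2(k-j)\}$ and match. Your invariant is more information than needed (and the aside ruling out two-term Boolean functions is unnecessary, since the library is given explicitly), but the argument is sound and the extra bookkeeping does no harm.
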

	
	\begin{proof}
		We define $h_1=h$, and
		$$h_k=h\left(\prod_{i=0}^{k-1} x_{1+4i},\ldots,\prod_{i=0}^{k-1} x_{4+4i}\right),$$
		where $h$ is the function from example~\ref{eq:basic_function}.
		By Lemma \ref{lemma:simulating_p}, $h_k$ is locally $1/2$-biased on $\{-1,1\}^{4k}$.
		Assume that $h_k$ is isomorphic to a tensor product of $h_1,\ldots,h_{k-1},g_2,...,g_{n-2}$, as in Proposition \ref{prop:product}. If there exists $1\leq i \leq j< k$ such that both $h_i$ and $h_j$ appear in a product that is isomorphic to $h_k$, then the Fourier decomposition of the product would have at least $16$ different monomials. But $h_k$ has only $4$ different monomials, and the functions cannot be isomorphic. 
		Similarly, if we do not use any of the functions $h_1,\ldots,h_{k-1}$, then we get the parity function, which has only one monomial in its Fourier decomposition.
		Hence, we may assume that there is only one $1 \leq i<k$ such that $h_i$ is in the product. Then, up to an automorphism, this function is of the form 
		$$f(x) = h_i(x_1,\ldots,x_{4i})g_{4k-4i}(x_{4i+1},\ldots,x_{4k}).$$
		On the one hand, by definition of $h_k$, its Fourier decomposition has pairs of monomials with no shared indices (e.g. the monomials that replace $x_1x_2$ and $x_3x_4$ in $h_{1}$).
		On the other hand, in the decomposition of $f$, all monomials have shared indices; for example $x_{4i+1}$ appears in all monomials. Hence they are not isomorphic.
	\end{proof}
	
	Using the functions $h_1,h_2,\ldots$ we can give a lower bound for the class of non-isomorphic locally $1/2$-biased functions.
	
	\begin{lemma} \label{lemma:num_of_solutions}
		The number of non-negative integer solutions to
		\begin{equation} \label{eq:integer_inequality}
		a_1+2a_2+\ldots+ka_k \leq k
		\end{equation}
		is at least $C4^{\sqrt{k}}/k^{1/4}$, where $C>0$ is a universal constant.
	\end{lemma}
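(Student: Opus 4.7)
The plan is to interpret the solutions to \eqref{eq:integer_inequality} as integer partitions and then count a carefully chosen subclass whose size already matches the target lower bound. A non-negative integer solution $(a_1,\ldots,a_k)$ corresponds to a partition of the integer $n = a_1 + 2a_2 + \cdots + ka_k \leq k$, where $a_i$ records the multiplicity of the part $i$. Thus \eqref{eq:integer_inequality} counts partitions of all integers $0,1,\ldots,k$ (with parts at most $k$, which is an automatic restriction).

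First I would set $m = \lfloor\sqrt{k}\rfloor$ and restrict attention to partitions whose Young diagram fits inside an $m \times m$ box, i.e.\ at most $m$ parts, each of size at most $m$. Any such partition satisfies $\sum i a_i \leq m^{2} \leq k$, so it automatically yields a valid solution to \eqref{eq:integer_inequality}. This subclass is small enough to count cleanly yet large enough to witness the claimed growth rate.

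Next I would invoke the classical bijection between Young diagrams inside an $m \times m$ box and monotone lattice paths from $(0,0)$ to $(m,m)$ (the path is the boundary between the diagram and its complement inside the box). The number of such paths is the central binomial coefficient $\binom{2m}{m}$, for which the standard estimate (via Stirling's formula, or via the direct bound $\binom{2m}{m} \geq 4^m/(2m+1)$ refined slightly) gives $\binom{2m}{m} \geq \frac{4^{m}}{2\sqrt{m}}$ for all $m \geq 1$.

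Finally I would substitute $m \geq \sqrt{k}-1$ and $\sqrt{m} \leq k^{1/4}$ to get
\[
\binom{2m}{m} \;\geq\; \frac{4^{m}}{2\sqrt{m}} \;\geq\; \frac{4^{\sqrt{k}-1}}{2 k^{1/4}} \;=\; \frac{1}{8}\cdot \frac{4^{\sqrt{k}}}{k^{1/4}},
\]
which gives the conclusion with $C = 1/8$ (adjusting trivially for small $k$). There is no real obstacle; the only point requiring any care is handling the floor in $m = \lfloor\sqrt{k}\rfloor$ so that the constant $C$ remains universal, and this is absorbed into the factor of $1/8$.
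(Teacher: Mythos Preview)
Your proof is correct and is essentially the same argument as the paper's. Both restrict to the subclass of solutions with $a_{m+1}=\cdots=a_k=0$ and $a_1+\cdots+a_m\le m$ for $m\approx\sqrt{k}$ (the paper phrases this as ``solutions to $\ell a_1+\cdots+\ell a_\ell\le k$'' and counts via stars-and-bars, you phrase it as ``Young diagrams in an $m\times m$ box'' and count via lattice paths), and both arrive at the central binomial coefficient $\binom{2\sqrt{k}}{\sqrt{k}}$ together with its Stirling estimate.
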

	
	\begin{proof}
		For any $1\leq \ell\leq k$, the number of solutions to (\ref{eq:integer_inequality}) is at least the number of solutions to
		$$\ell a_1+\ell a_2+\ldots+\ell a_\ell \leq k.$$
		It is well known that the number of solutions to this inequality is
		$$\binom{\ell+k/\ell}{\ell}.$$
		This term is maximized when $\ell^2 = k$. Hence, a lower bound for the number of solutions to (\ref{eq:integer_inequality}) is
		$$\binom{2\sqrt{k}}{\sqrt{k}}.$$
		By Stirling's formula, the asymptotic of this is $(1/\sqrt{\pi})4^{\sqrt{k}}/k^{1/4}$.
	\end{proof}
	
	\begin{remark}
		The number of integer solutions to the equality case is the famous partition function $p(n)$.
		Hardy and Ramanujan \cite{hardy+ramanujan18} showed precise asymptotics. 
		Using their result it is possible to show that the number of integer solutions is
		$$\sum_{j=1}^k p(j) \sim Ce^{c\sqrt{k}}/\sqrt{k},$$
		with explicit constants $C,c>0$. For our purposes, the simple estimation in Lemma \ref{lemma:num_of_solutions} is enough.
	\end{remark}
	
	\begin{prop} \label{prop:number_of_half_biased}
		Let $n$ be even. Let $B_{1/2}^n$ be a maximal class of non-isomorphic locally $1/2$-biased functions, i.e every two functions in $B_{1/2}^n$ are non-isomorphic to each other. Then $\abs{B_{1/2}^n} \geq C2^{\sqrt{n}}/n^{1/4}$, where $C>0$ is a universal constant.
	\end{prop}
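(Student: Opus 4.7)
The plan is to exhibit an explicit family of pairwise non-isomorphic locally $1/2$-biased functions on $\{-1,1\}^n$ indexed by non-negative integer tuples satisfying the inequality of Lemma~\ref{lemma:num_of_solutions}. Let $k := \lfloor n/4 \rfloor$. For each non-negative integer tuple $(a_1,\ldots,a_k)$ with $a_1 + 2 a_2 + \cdots + k a_k \leq k$, define
\[
f_{(a)} := h_1^{\otimes a_1} \otimes h_2^{\otimes a_2} \otimes \cdots \otimes h_k^{\otimes a_k} \otimes g_{2m}, \qquad m := \frac{n - 4\sum_{i=1}^k i a_i}{2},
\]
with $g_0$ interpreted as the empty tensor factor. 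By iterated application of Proposition~\ref{prop:product} (and the straightforward fact that $g_{2m}$ is locally $1/2$-biased for every $m$), each $f_{(a)}$ is locally $1/2$-biased on $\{-1,1\}^n$. By Lemma~\ref{lemma:num_of_solutions}, the number of eligible tuples is at least $C \cdot 4^{\sqrt{k}}/k^{1/4} = \Omega\bigl(2^{\sqrt{n}}/n^{1/4}\bigr)$.

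To show that distinct tuples produce non-isomorphic $f_{(a)}$, I extract $(a_1,\ldots,a_k)$ from an isomorphism invariant derived from the Fourier support $\mathcal{F}(F) := \{S : \hat{F}_S \neq 0\}$. Call two coordinates $i,j$ equivalent when the same members of $\mathcal{F}(F)$ contain them, and label each equivalence class \emph{full}, \emph{empty}, or \emph{partial} according to whether its coordinates lie in every, no, or some-but-not-all Fourier monomials. Every hypercube automorphism is a coordinate permutation composed with sign flips, and sign flips leave $\mathcal{F}(F)$ fixed as a set; consequently the multiset of labelled class sizes is preserved under isomorphism.

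A direct calculation evaluates this invariant on $f_{(a)}$. Each tensor factor $h_i$ contributes four partial classes of size $i$ (its ``blocks'', whose variables substitute into the same argument of $h$), and the $g_{2m}$-factor contributes, when $m > 0$, exactly one full class and one empty class, both of size $m$. The essential check is that classes do not merge across tensor factors. This holds because any variable in an $h_i$-factor lies in precisely half of that factor's four local Fourier monomials with monomial choices in the other factors being free; hence any two variables from different factors belong to incomparable subsets of $\mathcal{F}(f_{(a)})$, and a variable from the parity-support of $g_{2m}$ lies in every monomial while an $h_i$-variable does not. One thus recovers $a_i$ as one quarter the number of partial classes of size $i$, and $m$ as the size of a full (or empty) class.

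The main obstacle is precisely this last verification --- ruling out accidental merging of equivalence classes across tensor factors --- since this is the only place where the invariant could lose information. Once this step is established, non-isomorphism of the $f_{(a)}$ for distinct tuples is immediate, and the desired lower bound on $|B_{1/2}^n|$ follows directly from Lemma~\ref{lemma:num_of_solutions}.
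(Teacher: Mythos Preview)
Your proposal is correct and follows essentially the same approach as the paper: construct the family $f_{(a)}$ as tensor products of the $h_i$'s padded by a parity factor, and count the tuples via Lemma~\ref{lemma:num_of_solutions}. Your non-isomorphism argument via the labelled equivalence-class invariant on the Fourier support is in fact more explicit than the paper's, which simply appeals to ``the same arguments used in Proposition~\ref{prop:half_biased}'' without spelling out how distinct tuples are distinguished.
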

	
	\begin{proof}
		Let $k=\lfloor n/4\rfloor$. By Proposition \ref{prop:product}, we can construct locally $1/2$-biased functions by tensor products of $h_1,\ldots,h_k$ and $g_1,\ldots,g_n$, as follows: choose $j$ functions $\set{h_{i_j}}$ such that $m := \sum 4 i_j \leq n$. Then the tensor product $\otimes h_{i_j}$ uses $m$ variables. This can be completed to $n$ variables by tensoring with $g_{n-m}$.
						
		If two functions use the same $h_i$'s, then they are isomorphic (by change of indices). And if they have a different decomposition of $h_i$'s, then by the same arguments used in Proposition \ref{prop:half_biased}, they have a different Fourier decomposition and are therefore non-isomorphic. Thus, the isomorphic class of such a function is determined by the number of times each $h_i$ appears in the product. 
		
		Hence, the number of non-isomorphic functions we can construct in this manner is the number of solutions to
		\begin{equation} \label{eq:combinations_inequality}
		4a_1+8a_2+\cdots+4ka_k \leq n
		\end{equation}
		where the $a_1,\ldots,a_k$ are non-negative integers that represent the number of copies of $h_i$ in the product. Using Lemma \ref{lemma:num_of_solutions}, this number is at least $C4^{\sqrt{k}}/k^{1/4}=C'2^{\sqrt{n}}/n^{1/4}$
	\end{proof}
	
	It should be noted that a locally $1/2$-biased function has a natural condition on its Fourier decomposition.
	It might be possible to obtain better bounds on the number of non-isomorphic functions using this condition.
	
	\begin{prop}
		Let $f:\{-1,1\}^n\to\{-1,1\}$ be a locally $1/2$-biased function. Then the Fourier weight at degree $n/2$ is $1$.
	\end{prop}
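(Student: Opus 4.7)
The plan is to translate the local $1/2$-biased condition into a pointwise identity for $f$, and then read it off via uniqueness of the Fourier decomposition. Specifically, if $f$ is locally $1/2$-biased then at every vertex $x \in \{-1,1\}^n$ exactly $n/2$ of its $n$ neighbors satisfy $f(y) = 1$ and the other $n/2$ satisfy $f(y) = -1$, so
\[
\sum_{y \sim x} f(y) = 0 \qquad \text{for every } x \in \{-1,1\}^n.
\]

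Next I would expand the left-hand side in the Fourier basis. Writing $f = \sum_{S} \hat{f}(S)\,\chi_S$ with $\chi_S(x) = \prod_{i \in S} x_i$, the neighbor of $x$ obtained by flipping coordinate $i$ satisfies $\chi_S(x^{\oplus i}) = (-1)^{[i \in S]}\chi_S(x)$. Summing over the $n$ neighbors,
\[
\sum_{y \sim x} f(y) = \sum_{S} \hat{f}(S)\,\chi_S(x) \sum_{i=1}^{n} (-1)^{[i \in S]} = \sum_{S} \hat{f}(S)\,(n - 2|S|)\,\chi_S(x).
\]
Since this holds identically in $x$ and the characters $\chi_S$ form a basis, each coefficient must vanish: $\hat{f}(S)\,(n - 2|S|) = 0$ for every $S$. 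Hence $\hat{f}(S) = 0$ whenever $|S| \neq n/2$, i.e.\ all the Fourier mass of $f$ lives at level $n/2$.

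To conclude, I would invoke Parseval: since $f$ takes values in $\{-1,1\}$, we have $\sum_{S} \hat{f}(S)^{2} = \mathbb{E}[f^{2}] = 1$, and by the previous step this sum equals the Fourier weight at degree $n/2$, which is therefore $1$. There is no real obstacle here; the only thing to be mindful of is that $n$ must be even for degree $n/2$ to make sense, which is guaranteed by Theorem \ref{thm:main_theorem} (for $p = 1/2$ one needs $2 \mid n$, so this is built into the hypothesis).
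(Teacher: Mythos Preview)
Your argument is correct and, in fact, cleaner than the paper's. Both proofs rest on the same underlying observation, namely that a locally $1/2$-biased function lies in the kernel of the hypercube adjacency operator $A_n$, but they handle the linear algebra differently. The paper computes the spectrum of $A_n$ by the block recursion $A_n = \begin{pmatrix} A_{n-1} & I \\ I & A_{n-1} \end{pmatrix}$, deduces that the null space has dimension $\binom{n}{n/2}$, and then counts that the characters $\chi_S$ with $|S|=n/2$ are $\binom{n}{n/2}$ independent vectors in that null space, hence a basis. You bypass all of this by diagonalizing $A_n$ directly in the Fourier basis: your computation $\sum_{i}(-1)^{[i\in S]}=n-2|S|$ is exactly the statement that $A_n\chi_S=(n-2|S|)\chi_S$, so the vanishing of $A_nf$ forces $\hat f(S)=0$ off level $n/2$ immediately, and Parseval finishes. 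Your route is more elementary and self-contained; the paper's route has the minor side benefit of exhibiting the full spectrum of $A_n$, though that is not needed for the proposition.
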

	
	\begin{proof}
		Let $A_n$ be the adjacency matrix of the hypercube. The map
		$$f\to (f(a_1),...,f(a_{2^n})),$$
		where $a_1,...,a_{2^n}$ are the vertices of the hypercube, is a bijection between locally $1/2$-biased functions and the null space of $A_n$.	Since
		$$A_n=\begin{pmatrix}
		A_{n-1} & I\\
		I & A_{n-1}
		\end{pmatrix},$$
		We have
		$$P_n(t)=P_{n-1}(t-1)P_{n-1}(t+1),$$
		where $P_n$ is the characteristic polynomial of $A_n$. 
		For $A_2$ the eigenvalue $0$ has multiplicity $2$ and $\pm 2$ has $1$. Continuing by induction, the eigenvalues of $A_m$ are $-m, -m+2,...,m$ with multiplicities $\binom{m}{0}, \binom{m}{2},..., \binom{m}{m}$.	
		Hence, for even $n$ the dimension of the null space is $\binom{n}{n/2}$.
		For any $S\subseteq\{1,2,...,n\}$ with $\abs{S}=n/2$ we denote $\chi_S(x)=\prod_{i\in S}x_i$.
		These functions are all locally $1/2$-biased, hence we can define the image in the null space by the bijection with $v_S$. Note that there are $\binom{n}{n/2}$ such vectors, and they form an independent set. Hence the set $\{v_S\}_S$ is a basis of the null set. By the bijection we get that every locally biased function is a linear combination of $\chi_S$.
	\end{proof}
	
	Class sizes for locally $1/n$-biased functions can also be achieved via the following proposition.
	
	\begin{prop} \label{prop:non_isomorphic_1_n_functions}
		Let $n=2^{k}$, and let $C_{1}$ and $C_{2}$ be two non-isomorphic distance-$3$ perfect codes on the $n-1$-dimensional hypercube. Then the two functions $f_{1}$ and $f_{2}$ obtained by using $C_{1}$ and $C_{2}$ as the perfect codes in the proof of Lemma \ref{lemma:1_over_n_power_of_2} are non-isomorphic. 
	\end{prop}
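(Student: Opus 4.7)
The plan is to prove the contrapositive: if $f_1$ and $f_2$ are isomorphic via some hypercube automorphism $\psi$ of $\{-1,1\}^n$, then $C_1$ and $C_2$ must be isomorphic as codes via some hypercube automorphism of $\{-1,1\}^{n-1}$. The first step is bookkeeping: by construction $f_i^{-1}(1) = C_i \times \{-1,1\}$, so the relation $f_1 \circ \psi = f_2$ translates into the set identity $\psi(C_2 \times \{-1,1\}) = C_1 \times \{-1,1\}$.

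The crux is a structural observation about the subgraph of the hypercube induced on $C_i \times \{-1,1\}$. Because $C_i$ has minimum Hamming distance $3$, no two distinct codewords are hypercube-neighbors, so within $C_i \times \{-1,1\}$ two points $(y,b)$ and $(y',b')$ can be adjacent only when $y = y'$. Hence the induced subgraph is a perfect matching whose edges all lie in the $n$-th coordinate direction. I would then parametrize $\psi$ in the standard form $\psi(x)_i = \epsilon_i x_{\sigma(i)}$ for a permutation $\sigma \in S_n$ and signs $\epsilon_i \in \{-1,1\}$; such a $\psi$ sends hypercube edges in direction $k$ to edges in direction $\sigma^{-1}(k)$. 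Since the matching in $C_2 \times \{-1,1\}$ must be carried onto the matching in $C_1 \times \{-1,1\}$, and both matchings lie in direction $n$, this forces $\sigma(n) = n$.

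Once the last coordinate is known to be fixed by $\sigma$, the map $\psi$ factors as $\psi(y, b) = (\phi(y), \epsilon_n b)$, where $\phi$ is the hypercube automorphism of $\{-1,1\}^{n-1}$ given by $\phi(y)_i = \epsilon_i y_{\sigma(i)}$ for $i \leq n-1$. Substituting this factorization into the preimage identity yields $\phi(C_2) \times \{-1,1\} = C_1 \times \{-1,1\}$, hence $\phi(C_2) = C_1$, contradicting the hypothesis that $C_1$ and $C_2$ are non-isomorphic.

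The main obstacle I expect is the structural step that pins down $\sigma(n) = n$; this is where the distance-$3$ assumption is genuinely used, and it is also the step that converts an automorphism of the $n$-dimensional hypercube into an automorphism of the $(n-1)$-dimensional one. The preceding and succeeding steps are routine unwinding of the definition of $f_i$ and of the structure of the hypercube automorphism group.
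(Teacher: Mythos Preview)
Your argument is correct and follows the same strategy as the paper: both show that any automorphism $\psi$ with $f_1 \circ \psi = f_2$ must preserve the $n$-th coordinate direction (you via the perfect-matching structure of the induced subgraph on $C_i \times \{-1,1\}$, the paper by tracking where the hyperplane $\{x_n = 1\}$ is sent), and then restrict $\psi$ to obtain a code isomorphism on $\{-1,1\}^{n-1}$. The two presentations differ only cosmetically; your matching observation is arguably the more direct way to package the same use of the distance-$3$ hypothesis.
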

	
	\begin{proof}
		Suppose to the contrary that $f_{1}$ and $f_{2}$ are isomorphic, i.e there is an automorphism $\varphi: \set{-1,1}^{n} \rightarrow \set{-1,1}^n$ such that for all $x \in \set{-1,1}^{n}$, we have $f_{1}(x) = f_{2}(\varphi(x))$. Denote by $B =\set{(y,1) ;\; y\in \set{-1,1}^{n-1}}$ the $n-1$-dimensional hypercube obtained by fixing the last coordinate to $1$, denote $C = \set{(y,1) ;\; y \in C_{2}}$ and note that $\text{support}(f_{2}|_{B}) = C$ by construction. Consider  $\varphi|_{B}$, the restriction of $\varphi$ to $B$. This restriction is an isomorphism between $B$ and some $n-1$-dimensional hypercube $A$ contained within the $n$- dimensional hypercube. Any sub-hypercube of dimension $n-1$ is obtained from $\set{-1,1}^{n}$ by fixing one of the coordinates to be either $1$ or $-1$, and taking the span of all other coordinates. Then $A$ must be spanned by the first $n-1$ coordinates, leaving the last coordinate fixed: otherwise, by construction of Lemma \ref{lemma:1_over_n_power_of_2}, the set $A$ would contain two neighboring points $x$ and $x'$ that differ only in their last coordinate such that $f_{1}(x) = f_{1}(x') = 1$. This means there are $y,y' \in C$ obeying $\varphi(y) = x, \varphi(y') = x'$; but this is a contradiction, since $\varphi$ should preserve distances, and the distance between $x$ and $x'$ is $1$ while the distance between $y$ and $y'$ is $3$. So $A = \set{(y,b) ;\; y \in \set{-1,1}^{n-1}}$ for some $b \in \set{-1,1}$. But then $\varphi|_{B}$ is an isomorphism between $C_{1}$ and $C_{2}$, since $C_{1} $ is a perfect code in $A$ and $C_{2}$ is a perfect code in $B$; a contradiction. 
	\end{proof}
	
	\begin{cor}
		Let $n=2^{k}$. Let $B_{1/n}^n$ be the class of non-isomorphic locally $1/n$-biased functions. Then $\abs{B_{1/n}^n}$ is super-exponential in $n$.
	\end{cor}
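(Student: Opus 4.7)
The plan is to reduce the corollary directly to a known super-exponential lower bound on the number of inequivalent distance-$3$ perfect binary codes of length $2^k-1$. By Proposition~\ref{prop:non_isomorphic_1_n_functions}, non-isomorphic perfect codes $C_1,C_2$ on $\{-1,1\}^{n-1}$ yield non-isomorphic locally $1/n$-biased functions $f_1,f_2$ on $\{-1,1\}^n$ via the construction of Lemma~\ref{lemma:1_over_n_power_of_2}. Thus it suffices to exhibit a super-exponential (in $n=2^k$) family of pairwise non-equivalent perfect codes of length $n-1$ under the natural equivalence (coordinate permutation and coordinate complementation), since these are exactly the operations induced by hypercube automorphisms on the support set.

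The second step is to invoke the classical coding-theoretic fact: the number of inequivalent binary perfect single-error-correcting codes of length $2^k-1$ grows super-exponentially in the length. This follows, for instance, from the Vasil'ev construction, which starting from any perfect code of length $(2^{k-1}-1)$ and an arbitrary Boolean function on the information coordinates produces a new perfect code of length $2^k-1$; a careful accounting of how many of these lie in distinct equivalence classes (as carried out by Phelps and later strengthened by Krotov, Avgustinovich, and others) yields a lower bound of the form $2^{2^{c k}}$ on the number of inequivalent perfect codes, for some $c>0$. In particular, this count is super-exponential in $n = 2^k$, which is the required regime.

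Combining the two steps gives the corollary: the map $C \mapsto f_C$ from equivalence classes of distance-$3$ perfect codes on $\{-1,1\}^{n-1}$ to isomorphism classes of locally $1/n$-biased functions on $\{-1,1\}^n$ is injective by Proposition~\ref{prop:non_isomorphic_1_n_functions}, so $|B_{1/n}^n|$ inherits the super-exponential lower bound.

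The only real obstacle is the invocation of the external coding-theoretic result; there is no substantial new combinatorics to do inside this paper. If one wished a self-contained proof, the main work would be to give a streamlined version of Vasil'ev's switching construction and argue that sufficiently many of the resulting codes remain inequivalent, which is a standard but not entirely trivial exercise; citing the known bound is far cleaner and fits the style of the rest of the section.
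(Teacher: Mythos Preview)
Your proposal is correct and follows essentially the same approach as the paper: reduce via Proposition~\ref{prop:non_isomorphic_1_n_functions} to counting inequivalent distance-$3$ perfect codes on the $(n-1)$-cube, then invoke the known super-exponential lower bound (the paper cites Krotov's construction for this). The additional detail you give about Vasil'ev-type switchings and the form $2^{2^{ck}}$ is accurate but goes beyond what the paper spells out.
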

	
	\begin{proof}
		By Proposition \ref{prop:non_isomorphic_1_n_functions}, any lower bound on the number of non-isomorphic perfect codes on the $n-1$-dimensional hypercube gives a lower bound to the number of locally $1/n$-biased functions on the $n$-dimensional hypercube. Recent constructions, such as in \cite{Krotov2008}, give a super-exponential lower bound on the number of such perfect codes.
	\end{proof}
	
	We would have liked to apply the same argument to locally $m/n$-biased functions, as given by the construction in Lemma \ref{lemma:m_over_n_power_of_2}. Our argument there used the explicit construction of the Hamming code which, being linear, was easy to modify in order to obtain functions with disjoint supports. Such is not the case for the construction of non-linear codes. However, we still believe that similar estimates are true for any permissible $p$.
	
	\begin{cor}
		By Proposition \ref{prop:number_of_half_biased}, scenery reconstruction is impossible for even-dimensional hypercubes. 
	\end{cor}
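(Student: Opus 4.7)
The plan is to transform the class-size lower bound of Proposition~\ref{prop:number_of_half_biased} into a direct obstruction to scenery reconstruction by exhibiting two non-isomorphic functions whose observed sceneries are statistically identical.

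First, for any even $n\geq 4$, I would invoke Proposition~\ref{prop:number_of_half_biased} (together with the explicit small cases such as comparing $g_n$ with $h_1\otimes g_{n-4}$) to pick two non-isomorphic locally $1/2$-biased functions $f,g:\{-1,1\}^n\to\{-1,1\}$. The bound $C2^{\sqrt{n}}/n^{1/4}\geq 2$ holds for all sufficiently large $n$, and the remaining small cases are handled by the Fourier-monomial-count argument already used in the proof of Proposition~\ref{prop:half_biased} (the parity function $g_n$ has a single monomial, while $h_1\otimes g_{n-4}$ has four).

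Next, let $S_i$ and $T_i$ be simple random walks on $\{-1,1\}^n$ with $S_0,T_0$ uniform. I would verify the claim already recorded in the introduction: each scenery process $\{f(S_i)\}_i$ and $\{g(T_i)\}_i$ is a sequence of i.i.d.\ Bernoulli$(1/2)$ random variables. Uniformity of $S_i$ for all $i$ combined with the double counting of equation~(\ref{eq:double_counting}) gives the marginals; for independence, condition on the history $S_0,\ldots,S_i$: since $S_{i+1}$ is a uniform neighbor of $S_i$ and $f$ is locally $1/2$-biased,
\begin{equation*}
\bP\bigl(f(S_{i+1})=1\mid S_0,\ldots,S_i\bigr)=\tfrac{1}{2},
\end{equation*}
independently of $f(S_0),\ldots,f(S_i)$. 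The same holds for $g$.

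Hence the two processes $\{f(S_i)\}_i$ and $\{g(T_i)\}_i$ have identical law, so no measurable functional of the scenery can distinguish $f$ from $g$; in particular, since $f$ and $g$ are not related by any hypercube automorphism, reconstruction up to isomorphism fails with positive (in fact, full) probability. There is essentially no obstacle here: all the substantive work was performed in Proposition~\ref{prop:number_of_half_biased}, and the corollary is a direct translation from ``two non-isomorphic locally $1/2$-biased functions exist'' to ``scenery reconstruction is impossible,'' using only the elementary Markov-property computation above.
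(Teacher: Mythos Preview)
Your argument is correct and matches the paper's intent. The paper gives no separate proof for this corollary: it is recorded as an immediate consequence of Proposition~\ref{prop:number_of_half_biased} together with the observation (stated in the introduction) that any locally $p$-biased function produces an i.i.d.\ Bernoulli$(p)$ scenery. You have simply written out that implication in full, including the conditional-probability computation and the small-$n$ check via the monomial-count comparison of $g_n$ versus $h_1\otimes g_{n-4}$. Your restriction to $n\geq 4$ is also appropriate, since the paper notes elsewhere that reconstruction is in fact possible for $n\leq 3$.
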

	For odd dimensional hypercubes, on which there are no non-trivial locally biased functions, we use locally stable functions instead, as described in the next section.
	
	%-----------------------------------------------------------------------
	
	\section{Locally $p$-stable functions}\label{sec:locally_stable_functions}
	Unlike locally $p$-biased functions, there is no restriction on permissible $p$ values for locally $p$-stable functions:
	\begin{observation}
		Let $p = m/n$ for some $m \in \set{0,1,\ldots,n}$. Then the parity function on $n-m$ variables, 
		$$ f(x_1,\ldots, x_n) = x_{m+1} x_{m+2}\ldots x_n $$
		is locally $p$-stable.
	\end{observation}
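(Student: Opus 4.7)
The plan is to verify the stability condition directly at an arbitrary vertex by classifying its $n$ neighbors according to which coordinate is flipped. Fix $x \in \{-1,1\}^n$ and consider its neighbor $x^{(i)}$, obtained by flipping the $i$-th coordinate.

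First I would handle the case $1 \leq i \leq m$: since $f$ depends only on coordinates $m+1, \ldots, n$, flipping $x_i$ does not affect the value of $f$, so $f(x^{(i)}) = f(x)$. This contributes $m$ neighbors on which $f$ retains its value. Next, for $m+1 \leq i \leq n$, the function $f$ is the product of variables including $x_i$, so flipping $x_i$ flips exactly one factor of the product and hence $f(x^{(i)}) = -f(x)$. This gives $n - m$ neighbors on which $f$ changes value.

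Summing up, the fraction of neighbors $y$ of $x$ with $f(y) = f(x)$ is exactly $m/n = p$, which is the required condition for local $p$-stability. Since $x$ was arbitrary, $f$ is locally $p$-stable.

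There is essentially no obstacle here; the statement reduces to the basic observation that the parity of a subset of coordinates is sensitive precisely to flips inside that subset and insensitive to flips outside it. The only minor edge cases worth mentioning explicitly are $m = n$ (giving the constant function $f \equiv 1$, which is trivially locally $1$-stable) and $m = 0$ (the full parity function on $n$ variables, whose value flips under every coordinate change and is therefore locally $0$-stable).
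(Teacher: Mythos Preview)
Your proof is correct and is exactly the direct verification the paper has in mind; the statement is labeled an Observation and given without proof, precisely because the argument is the one-line check you wrote out (flipping a coordinate outside the support of the monomial fixes $f$, flipping one inside negates it).
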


	Thus we will focus on the number of non-isomorphic pairs of locally stable functions. A negative result is attainable by a simple examination: 
	
	\begin{prop}
		If $p = 1/n$ or $p = (n-1)/n$, then the parity function is the only locally stable $p$-function on the hypercube, up to isomorphisms.
	\end{prop}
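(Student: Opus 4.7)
The plan is to prove that in both cases the structure of same-/different-valued neighbors forces $f$ to be parity on $n-1$ or $1$ variables, respectively. Treat first the case $p = 1/n$. For each $x$, define $\pi(x)$ to be the unique neighbor of $x$ with $f(\pi(x)) = f(x)$. Because ``has the same value as'' is symmetric and $\pi(x)$ itself has a unique agreeing neighbor, we get $\pi(\pi(x)) = x$, so $\pi$ is an involution. Write $\pi(x) = x \oplus e_{i(x)}$, where $\oplus e_i$ denotes flipping coordinate $i$ and $i(x) \in \{1,\ldots,n\}$. The central step is to show that $i(x)$ is constant on the whole hypercube, i.e., the perfect matching given by $\pi$ is parallel to a single coordinate direction.

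To prove $i$ is constant, fix $x$ with $i(x) = 1$ and any $j \neq 1$, and set $y = x \oplus e_j$. I would compute $\pi(y)$ directly. Since $\pi(x) = x \oplus e_1$, uniqueness forces $f(y) \neq f(x)$. Moreover $y \oplus e_1 = \pi(x) \oplus e_j$ is a neighbor of $\pi(x)$ in a direction other than the one realizing $\pi$ at $\pi(x)$, so $f(y \oplus e_1) \neq f(\pi(x)) = f(x)$. Combining these two inequalities in the two-valued set $\{-1,1\}$ yields $f(y) = f(y \oplus e_1)$; hence $\pi(y) = y \oplus e_1$ and $i(y) = 1$. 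Because the graph obtained from the hypercube by deleting all edges in direction $1$ is still connected (for $n \geq 2$), iterating this propagation shows $i \equiv 1$ globally.

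With $\pi(x) = x \oplus e_1$ for every $x$, the function $f$ is independent of $x_1$ and, viewed on the $(n-1)$-cube in coordinates $x_2,\ldots,x_n$, changes value under every single coordinate flip. Such a function is forced to equal $\pm x_2 x_3 \cdots x_n$, and a sign flip of any $x_i$, $i \geq 2$, is a hypercube automorphism swapping the two signs; thus up to isomorphism $f$ is the parity function on $n-1$ variables. For $p = (n-1)/n$ I would run the identical argument with $\pi(x)$ redefined as the unique \emph{disagreeing} neighbor. The involution property and the propagation lemma go through verbatim, forcing $f(x \oplus e_1) = -f(x)$ and $f(x \oplus e_j) = f(x)$ for all $j \neq 1$, and hence $f(x) = \pm x_1$, which is the parity function on one variable up to automorphism. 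The main obstacle in both cases is the propagation step; the short computation above, exploiting the involution at $\pi(x)$, is what makes it work.
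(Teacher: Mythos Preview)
Your approach is essentially the same as the paper's: define the map to the unique special neighbor, observe it is an involution, and propagate the direction across a $2$-face to conclude the special direction is constant. The paper carries this out for $p=(n-1)/n$ (unique disagreeing neighbor) and declares $p=1/n$ analogous, while you start from $p=1/n$; the computations match.

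One small slip: the graph obtained from the $n$-cube by deleting all edges in direction $1$ is \emph{not} connected---it splits into the two $(n-1)$-subcubes $\{x_1=1\}$ and $\{x_1=-1\}$. This is harmless, though, because you already have $i(\pi(x))=1$ from the involution, i.e.\ $i(x\oplus e_1)=1$ as well; so in fact $i(y)=1$ for \emph{every} neighbor $y$ of $x$, and ordinary connectedness of the hypercube finishes the propagation. With that correction your argument is complete.
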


	\begin{proof}		
		We prove only for $p = (n-1)/n$; the proof for $p=1/n$ is similar. 			
		
		We will show that $f$ depends only on a single coordinate. Let $x$ be an initial point in the hypercube and $y$ its unique neighbor such that $f(x) \neq f(y)$. Denote the coordinate in which they differ by $i$. By local stability, every other neighbor $x'$ of $x$ has $f(x') = f(x)$, and every other neighbor $y'$ of $y$ has $f(y') = f(y)$.		
		
		Let $j\neq i$, let $\tilde{x}$ be the neighbor of $x$ that differs from $x$ in coordinate $j$, and let $\tilde{y}$ be the neighbor of $y$ that differs $y$ in coordinate $j$. Then $\tilde{x}$ is a neighbor of $\tilde{y}$, since $\tilde{x}$ and $\tilde{y}$ differ only in the $i$-th coordinate. Also, since $f(x) = f(\tilde{x})$ and $f(y) = f(\tilde{y})$ but $f(x) \neq f(y)$, we have $f(\tilde{x}) \neq f(\tilde{y})$. 
		
		Since $f$ is locally $(n-1)/n$-stable, each of $x$'s neighbors $x'$ has exactly one neighbor $y'$ on which $f$ attains the opposite value. By the above, for each such $x'$, the corresponding $y'$ differs from it in the $i$-th coordinate. This reasoning can be repeated, choosing a neighbor of $x$ as the initial starting point, showing that for all $x'$ with the same $i$-th coordinate as $x$, $f(x) = f(x')$, while for all $x'$ that differ in the $i$-th coordinate from $x$, $f(x) \neq f(x')$. This means that either $f(x) = x_i$ or $f(x) = -x_i$.
	\end{proof}
	
	\begin{figure}[h]
		\centering
		\includegraphics[scale = 0.5]{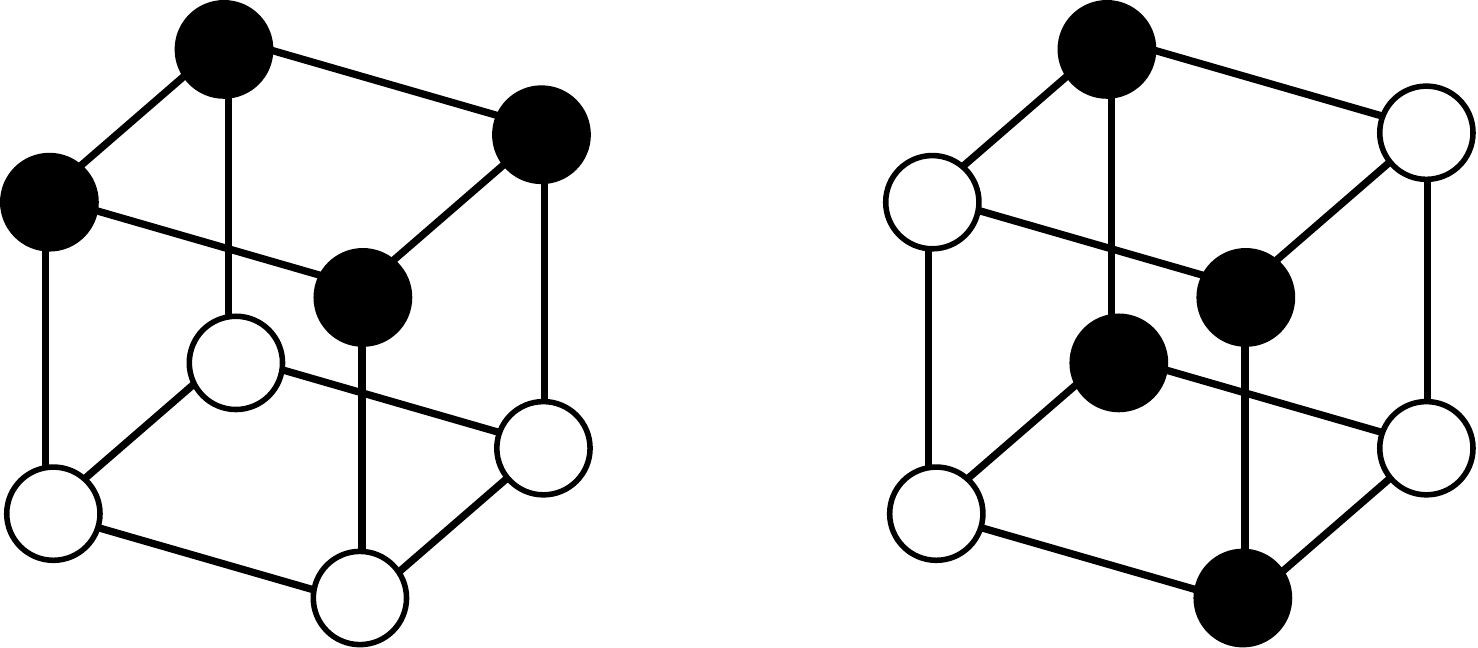}
		\caption{Left: The only locally $(n-1)/n$-stable function is the parity function on $1$ variable. Right: The only locally $1/n$-stable function is the parity function on $n-1$ variables.}
		\label{fig:parity}
	\end{figure}
	
	Many other $p$ values, however, have larger classes of non-isomorphic locally $p$-stable functions, since locally stable functions can be built out of locally $1/2$-biased functions:
	
	\begin{prop} \label{prop:locally_biased_to_locally_stable}
		Let $n > 0$ be an even integer. For every locally $1/2$-biased function $f$ on the $n$-dimensional hypercube, there exists a locally $(n/2)/(n+1)$-stable function $f'$ on the $n+1$-dimensional hypercube. Further, if $f$ and $g$ are two non-isomorphic locally $1/2$-biased functions, then $f'$ and $g'$ are also non-isomorphic. 		
	\end{prop}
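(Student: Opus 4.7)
The plan is to define $f'$ explicitly by
$$f'(x_1,\ldots,x_{n+1}) \;=\; x_{n+1}\cdot f(x_1,\ldots,x_n),$$
lifting $f$ to the $(n+1)$-cube by multiplication with the new coordinate. To verify local $(n/2)/(n+1)$-stability, I would fix a vertex $v=(x,b)\in\{-1,1\}^{n+1}$ and partition its $n+1$ neighbors into two kinds. The neighbor $(x,-b)$, obtained by flipping coordinate $n+1$, has value $-bf(x)$ and always disagrees with $f'(v)=bf(x)$. The neighbor $(x^{(i)},b)$ obtained by flipping coordinate $i\leq n$ has value $bf(x^{(i)})$, agreeing with $f'(v)$ iff $f(x^{(i)})=f(x)$. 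Local $1/2$-bias of $f$ guarantees that exactly $n/2$ of $x$'s neighbors share its $f$-value (regardless of whether $f(x)=1$ or $-1$), so exactly $n/2$ of $v$'s $n+1$ neighbors agree, giving the required stability.

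For the non-isomorphism claim, the key invariant is that coordinate $n+1$ is (generically) the unique coordinate of $\{-1,1\}^{n+1}$ whose flip always changes $f'$; flipping any other coordinate $i\leq n$ preserves $f'$ at the $n/2$ vertices where $f(x^{(i)})=f(x)$. This ``always-flipping'' property is preserved by hypercube automorphisms, so any automorphism $\psi$ satisfying $f'\circ\psi=g'$ must map coordinate $n+1$ to coordinate $n+1$ (up to a sign), and hence decomposes as $\psi(x)=(\sigma(x_1,\ldots,x_n),\,\epsilon\, x_{n+1})$ for some sign $\epsilon\in\{-1,1\}$ and automorphism $\sigma$ of $\{-1,1\}^n$. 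Substituting into $f'\circ\psi=g'$ and comparing $x_{n+1}g(x_1,\ldots,x_n)$ with $\epsilon\, x_{n+1}(f\circ\sigma)(x_1,\ldots,x_n)$ yields $g=\epsilon(f\circ\sigma)$, so $f\cong g$ when $\epsilon=1$.

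The main obstacle is the remaining case $\epsilon=-1$, which only gives $g\cong -f$. To upgrade this to $f\cong g$, one must show that any locally $1/2$-biased $f$ is isomorphic to its negation. Here I would invoke the Fourier-support characterization proved earlier in the section, which says that every locally $1/2$-biased $f$ is a linear combination of characters $\chi_S$ with $|S|=n/2$. When $n/2$ is odd, the antipodal automorphism $x\mapsto -x$ multiplies every such $\chi_S$ by $(-1)^{n/2}=-1$, giving $f(-x)=-f(x)$ immediately. When $n/2$ is even, the antipodal map fails and one must instead choose a coordinate-flip $\tau$ tailored to the Fourier support of $f$, so that every $\chi_S$ appearing picks up a factor of $-1$; handling this uniformly for all $\pm 1$-valued locally $1/2$-biased $f$ (and dealing with the mildly degenerate case where $f$ itself already has an always-flipping coordinate, which would spoil the uniqueness of the distinguished coordinate of $f'$) is the delicate step where I expect the most care will be required.
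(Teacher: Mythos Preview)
Your construction $f'(x_1,\ldots,x_{n+1})=x_{n+1}\,f(x_1,\ldots,x_n)$ and your verification of local $(n/2)/(n+1)$-stability are exactly what the paper does.

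For the non-isomorphism clause, the paper's entire proof is the single sentence ``The claim about non-isomorphism follows directly from the functions' Fourier decomposition.'' Your argument via the always-flipping coordinate is in the same spirit but far more explicit, and the subtleties you flag are real and are not addressed by the paper at all. Two remarks may sharpen your plan. First, the ``mildly degenerate case'' is actually the easy one: if $f$ already has an always-flipping coordinate $j$, then flipping $x_j$ is an automorphism of $\{-1,1\}^n$ sending $f$ to $-f$, so $f\cong -f$ is immediate there and the $\epsilon=-1$ branch causes no trouble. Second, in the remaining (generic) case your reduction is tight: the existence of an isomorphism $f'\cong g'$ with $\epsilon=-1$ but none with $\epsilon=+1$ is equivalent to $f\not\cong -f$, so the proposition as stated for \emph{all} locally $1/2$-biased $f$ really does require $f\cong -f$ in general. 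Your antipodal argument handles $n/2$ odd; for $n/2$ even the question of finding a sign-flip set $T$ with $|S\cap T|$ odd for every $S$ in the Fourier support is a genuine linear-algebra condition over $\mathbb{F}_2$ that is not obviously always satisfiable, and the paper offers nothing here.

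In short: your approach coincides with the paper's for the first clause, and for the second clause you are being considerably more careful than the paper, whose one-line justification glosses over exactly the $\epsilon=-1$ issue you identified. For the paper's downstream use (counting non-isomorphic locally stable functions via the explicit family built from the $g_n$ and $h_k$), every function in that family visibly satisfies $f\cong -f$, so the applications survive; but as a proof of the proposition in full generality, the paper's sentence is no more complete than your proposal.
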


	\begin{proof}
		Define $f'$ by		
		$$ f'(x_1,\ldots,x_{n+1}) = f(x_1,\ldots, x_n) \cdot x_{n+1}.$$		
		Let $x \in \set{-1,1}^{n+1}$ be a point in the $n+1$-dimensional hypercube. Since $f$ is locally $1/2$-biased, the function $f'$ attains the value $1$ on exactly half of $x$'s neighbors which differ from $x$ in one of the first $n$ coordinates, and the value $-1$ on the other half of these neighbors. Alternatively, $f'$ retains its value on exactly half of the neighbors which differ in the first $n$ coordinates. For the neighbor that is different from $x$ in the last coordinate, though, $f'$ flips its sign. Therefore $f'$ retains its value on a $(n/2)/(n+1)$ fraction of $x$'s neighbors, so $f$ is locally $(n/2)/(n+1)$-stable.
		
		The claim about non-isomorphism follows directly from the functions' Fourier decomposition.		
	\end{proof}

	Observe that unlike locally biased functions, locally stable functions can be easily extended to higher dimension:
	
	\begin{observation} \label{obv:n_m_stable}
		Let $f$ be a locally $(n-m)/n$-stable function. Then $f$ can be extended to hypercubes of size $n' \geq n$ by simply ignoring all but the first $n$ coordinates. This gives a locally $(n'-m)/n'$-stable function.
	\end{observation}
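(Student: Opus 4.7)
The plan is to verify the stability ratio directly by counting, since the construction is given explicitly: define $f'(x_1,\ldots,x_{n'}) = f(x_1,\ldots,x_n)$ and count, for each $x' \in \{-1,1\}^{n'}$, how many neighbors agree with $x'$ under $f'$.

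I would split the $n'$ neighbors of $x'$ into two groups based on which coordinate is flipped. First, consider the $n$ neighbors obtained by flipping one of the first $n$ coordinates. For these neighbors, $f'$ depends only on coordinates $1,\ldots,n$, so agreement of $f'$ at $x'$ with $f'$ at the neighbor is the same as agreement of $f$ at $(x_1,\ldots,x_n)$ with $f$ at the corresponding hypercube-neighbor inside $\{-1,1\}^n$. By the assumption that $f$ is locally $(n-m)/n$-stable, exactly $n-m$ of these neighbors preserve the value.

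Second, consider the $n'-n$ neighbors obtained by flipping one of the last $n'-n$ coordinates. Since $f'$ ignores these coordinates, $f'$ takes the same value on all such neighbors as it does on $x'$, contributing all $n'-n$ to the agreement count.

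Summing, the total number of neighbors of $x'$ on which $f'$ agrees with its value at $x'$ is $(n-m) + (n'-n) = n' - m$, out of a total of $n'$ neighbors. Hence $f'$ is locally $(n'-m)/n'$-stable, as claimed. There is no real obstacle here; the proof is a one-line counting argument, and the point of recording it as an observation is simply to contrast this easy extension with the rigid dimensional restrictions of Theorem~\ref{thm:main_theorem} for locally biased functions.
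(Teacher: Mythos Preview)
Your proposal is correct; the paper states this as an observation without proof, and your direct counting argument (splitting the $n'$ neighbors into the $n$ that affect $f$ and the $n'-n$ that do not) is exactly the intended verification.
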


	We can use this observation to give a lower bound on the number of locally $(n'-m)/n'$-stable functions for a fixed $m$ and any $n' \geq 2m-2$. 
	This works as follows: first, pick any fixed $m>1$. Using Proposition \ref{prop:locally_biased_to_locally_stable}, we obtain a locally $(n-m)/n = (n/2)/(n+1)$-stable with $n = 2m - 2$. This can be extended by Observation \ref{obv:n_m_stable} to any $n'\geq n$, and together with Proposition \ref{prop:number_of_half_biased} we get a lower bound of $C2^{\sqrt{2m-2}}/(2m-2)^{1/4}$ different locally $(n'-m)/n'$-stable functions. 
	
	This observation also provides us with a pair of non-isomorphic locally stable functions for all hypercubes of dimension $n\geq 5$, showing that:
	\begin{cor}
		Scenery reconstruction is impossible for $n$-dimensional hypercubes for $n\geq5$.
	\end{cor}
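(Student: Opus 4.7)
The plan is to invoke the criterion from the introduction: scenery reconstruction on the $n$-dimensional hypercube is impossible as soon as there exist either two non-isomorphic locally $p$-biased functions or two non-isomorphic locally $p$-stable functions on $\set{-1,1}^n$, for some common value of $p$. I would then split on the parity of $n$.

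For even $n \geq 6$, I would simply appeal to the corollary immediately preceding this one, which itself follows from Proposition \ref{prop:number_of_half_biased}: two non-isomorphic locally $1/2$-biased functions already exist on $\set{-1,1}^n$, and their common scenery process is i.i.d.\ Bernoulli$(1/2)$. Even when the asymptotic bound $C 2^{\sqrt{n}}/n^{1/4}$ is quantitatively loose, the underlying tensor construction of Proposition \ref{prop:product} already furnishes more than one non-isomorphic class whenever $n \geq 4$ is even (for instance, the solutions $a_1 = 0$ and $a_1 = 1$ to $4a_1 \leq n$).

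For odd $n \geq 5$, my plan is to apply Proposition \ref{prop:locally_biased_to_locally_stable}, which lifts a locally $1/2$-biased function on $\set{-1,1}^{n-1}$ to a locally $((n-1)/2)/n$-stable function on $\set{-1,1}^n$ while preserving non-isomorphism. For odd $n \geq 7$, the required pair of non-isomorphic inputs on the $(n-1)$-dimensional hypercube comes directly from Proposition \ref{prop:number_of_half_biased}. The main obstacle, and the only delicate point, is the base case $n = 5$, where the dimension of the inputs is only $4$ and the counting bound is not transparently strong enough. Here I would verify by hand that the two basic examples already recorded in the paper, namely $g_4(x) = x_1 x_2$ and $h_1 = h$ on $\set{-1,1}^4$, are non-isomorphic: their Fourier expansions contain $1$ and $4$ monomials respectively, and since hypercube automorphisms act on Fourier decompositions only by permuting indices and flipping signs on subsets of coordinates, no such automorphism can identify the two. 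Applying Proposition \ref{prop:locally_biased_to_locally_stable} to this pair then yields two non-isomorphic locally $2/5$-stable functions on $\set{-1,1}^5$, finishing the base case and hence the corollary.
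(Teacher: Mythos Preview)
Your argument is correct. The paper's route differs slightly: instead of splitting on parity and invoking Proposition~\ref{prop:locally_biased_to_locally_stable} separately at each odd dimension, the paper produces a single non-isomorphic pair of locally $2/5$-stable functions on $\set{-1,1}^5$ (this is exactly your base case, lifting $g_4$ and $h$ from dimension $4$) and then uses Observation~\ref{obv:n_m_stable} to extend that pair to locally $(n-3)/n$-stable functions on every hypercube of dimension $n\geq 5$, even and odd alike. The trade-off is minor. The paper's approach handles all $n\geq 5$ uniformly with one extension step, but tacitly relies on the fact that padding by dummy coordinates preserves non-isomorphism (easy from the Fourier decomposition, though not stated explicitly). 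Your approach avoids Observation~\ref{obv:n_m_stable} altogether, at the cost of two separate mechanisms for even and odd $n$. Both ultimately rest on the same dimension-$4$ pair $g_4$ and $h$; incidentally, your split of the odd case into $n=5$ versus $n\geq 7$ is unnecessary, since the explicit pair $g_{n-1}$ and $h\otimes g_{n-5}$ works uniformly for every odd $n\geq 5$.
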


	%-----------------------------------------------------------------------
	
	\section{Other directions and open questions}\label{sec:open_questions}
	In this section we discuss similar results and questions for other graphs. We also list some further questions regarding locally biased and locally stable functions on the hypercube.	For other excellent open problems see~\cite{finucane+tamuz+yaari2014}.
	
	\subsection{Hypercube reconstruction}
	Our work shows that in general, Boolean functions on the hypercube cannot be reconstructed.
	
	\begin{que}
		Under which conditions is it possible to reconstruct Boolean functions on the hypercube? 
	\end{que}

	\begin{que}
		Is a random Boolean function reconstructible with high probability?
	\end{que}

	\begin{remark}
		Using the techniques of \cite{benjamini+kesten96}, it can be shown that reconstruction is always possible in the hypercube of dimension at most $3$.
	\end{remark}
	
	\subsection{Other graphs}
	
	Note that the necessity condition on $p$ of Theorem \ref{thm:main_theorem} can be applied to any finite regular graph, ruling out functions based on the relation between the graph degree and the number of vertices. 
	
	\subsection*{Trees}
	Let $G$ be an $n$-regular infinite tree. Then for any $p = b/n$, $b = 0,1,\ldots,n$ there exists a locally $p$-biased function. Such a function can be found greedily by picking a root vertex $v \in G$, setting $f(v) = 1$, and iteratively assigning values to vertices further away in any way that meets the constraints. 
		
	Notice that the method above requires picking some initial vertex, and that the method yields many possible functions on labeled trees (all of which are isomorphic when we remove the labels). Once the initial vertex $v$ has been fixed, it is possible to generate a distribution on locally $p$-biased functions, by setting	$f(v)$ to be $1$ with probability $b/n$, and randomly expanding from there.
			
	\begin{que}
		For an $n$-regular tree $G$, find an invariant probability measure on locally $p$-biased functions that commutes with the automorphisms of the tree. 
	\end{que}	

	\subsection*{The standard lattice}
	
		\begin{figure}[h]
		\centering
		\includegraphics[scale = 0.5]{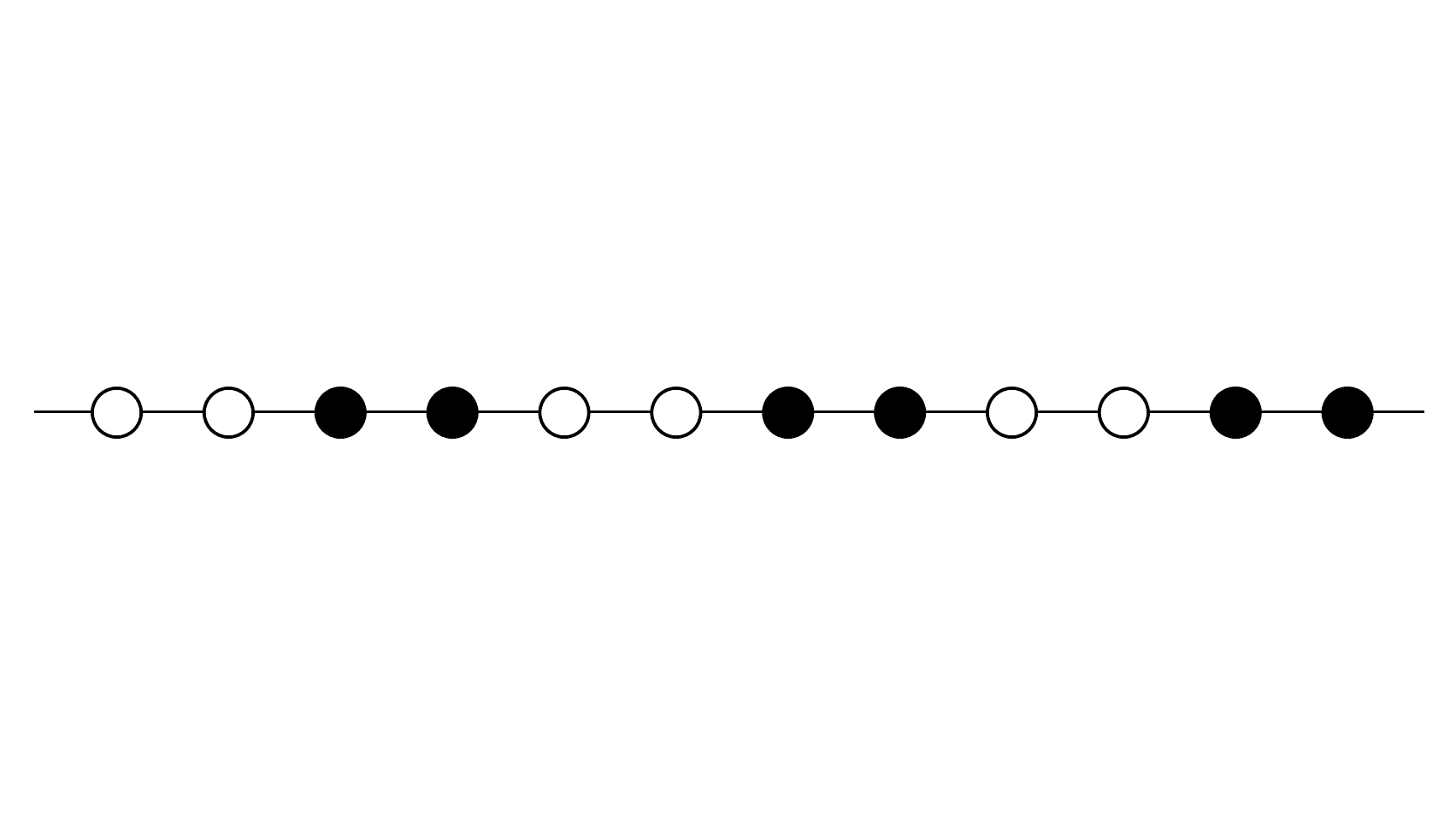}
		\includegraphics[scale = 0.5]{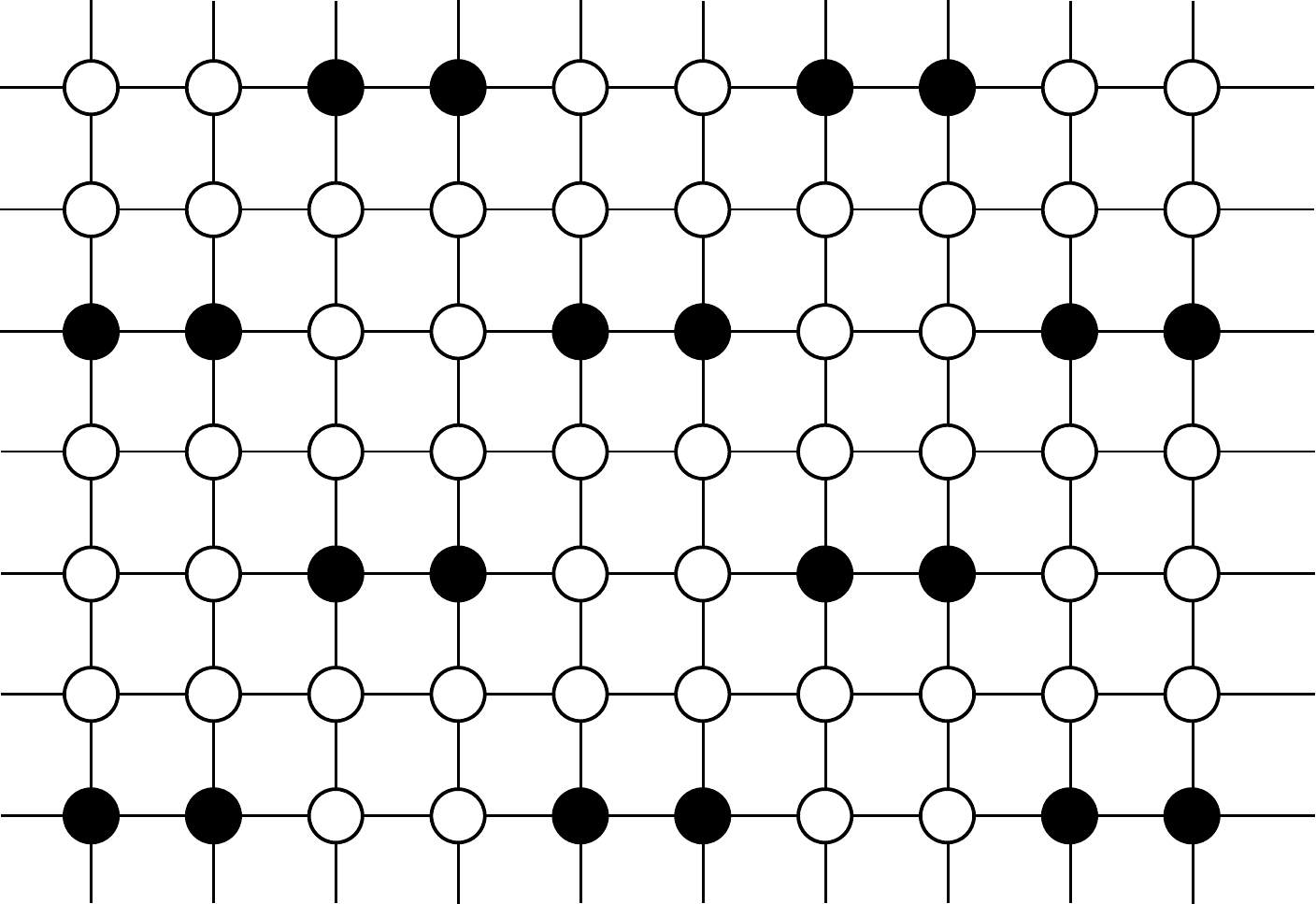}
		\caption{Left: a locally $1/2$-biased function on $\bZ$. Right: a locally $1/4$-biased function on $\bZ^2$. }
		\label{fig:z_locally_biased}
	\end{figure}	
	
	The following propositions show that there is a one-to-one mapping of locally $p$-biased functions from the hypercube to $\bZ^n$. Since automorphisms of the lattice can be pulled back to automorphisms of the hypercube, we get lower bounds for the size of non-isomorphic locally $p$-biased functions on $\bZ^n$.
	
	\begin{prop}\label{prop:embedding_to_zn}
		Let $f:\{-1,1\}^n\to \{-1,1\}$ be a locally $p$-biased function. 
		Then there exists a locally $p$-biased extension $\tilde{f}:\bZ^d\to \{-1,1\}$ such that $\big.f\big|_{\{-1,1\}^n}=f$. In addition, if $f$ and $g$ are non-isomorphic locally $p$-biased functions on the hypercube, then $\tilde{f}$ and $\tilde{g}$ are non-isomorphic.
	\end{prop}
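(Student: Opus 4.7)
The plan is to pull $f$ back along the natural ``fold'' quotient from $\bZ^n$ onto the Boolean hypercube. Identifying $\{-1,1\}^n$ with $\{0,1\}^n$ as in the introduction, I would define the projection $\pi:\bZ^n\to\{0,1\}^n$ by reducing each coordinate modulo $2$, i.e.\ $\pi(m_1,\ldots,m_n) = (m_1\bmod 2,\ldots,m_n\bmod 2)$, and set $\tilde{f}:=f\circ\pi$. The restriction identity $\tilde{f}|_{\{0,1\}^n}=f$ is then immediate since $\pi$ is the identity on the fundamental domain $\{0,1\}^n\subset\bZ^n$.

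To verify local $p$-biasedness, I would note that every lattice point $m\in\bZ^n$ has $2n$ neighbors $m\pm e_i$, $i=1,\ldots,n$, and that under $\pi$ both $m+e_i$ and $m-e_i$ land on the unique hypercube neighbor of $\pi(m)$ obtained by flipping its $i$-th coordinate. So the $2n$ lattice neighbors of $m$ cover the $n$ hypercube neighbors of $\pi(m)$ each with multiplicity exactly $2$. Since $f$ is locally $p$-biased, $pn$ of those hypercube neighbors carry value $1$, giving $2pn$ out of $2n$ lattice neighbors of $m$ — the required $p$-fraction.

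For the non-isomorphism part, my key input would be the classical fact that the graph-automorphism group of $\bZ^n$ is the semi-direct product $\bZ^n \rtimes B_n$, where $B_n$ is the hyperoctahedral (signed-permutation) group: every such automorphism has the form $\psi(x)=Ax+b$ with $A$ a signed permutation matrix and $b\in\bZ^n$. Any such $\psi$ descends to an automorphism $\bar{\psi}$ of the quotient $\bZ^n/2\bZ^n\cong\{0,1\}^n$ because the linear part $A$ preserves the sublattice $2\bZ^n$; after reduction modulo $2$ the signs disappear and $\bar{\psi}$ becomes a coordinate permutation followed by an XOR, which is precisely a graph automorphism of the hypercube. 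If $\tilde{g}\circ\psi=\tilde{f}$, I would push the identity $\pi\circ\psi=\bar{\psi}\circ\pi$ through to obtain $g\circ\bar{\psi}=f$, contradicting the assumption that $f$ and $g$ are non-isomorphic on the hypercube.

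The main (minor) obstacle will be pinning down the graph-automorphism group of $\bZ^n$ carefully and checking that descent really captures every potential isomorphism between $\tilde{f}$ and $\tilde{g}$. Everything else is bookkeeping with $\pi$, subject to the remark that the translation vector $b$ need not lie in $2\bZ^n$ for $\psi$ to descend: only the linear part $A$ must preserve the even sublattice, which signed permutations automatically do.
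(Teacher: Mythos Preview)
Your proposal is correct and follows essentially the same approach as the paper: both define $\tilde f$ as the pullback of $f$ along the coordinatewise mod-$2$ reduction $\bZ^n\to\{0,1\}^n$, and both argue non-isomorphism by observing that every automorphism of $\bZ^n$ (a translation composed with a signed permutation) descends to an automorphism of the hypercube. The only cosmetic difference is in verifying local $p$-biasedness---you use the clean ``each hypercube neighbor is hit with multiplicity $2$'' count, whereas the paper phrases the same fact via a tiling of $\bZ^n$ by translated cubes $Q_t=t+\{0,1\}^n$.
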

	
	\begin{proof}
		Here we think of the hypercube as $\{0, 1\}^n$ instead of $\{-1,1\}^n$.
		Let 
		$$\psi(x_1,\ldots,x_n)=(x_1\textrm{ mod }2,\ldots,x_n\textrm{ mod }2).$$ 
		For any $t=(t_1,\ldots,t_n)\in\bZ^n$ define $Q_t=t+\{0,1\}^n$. Then the set $\psi(Q_t)$ is the hypercube $\{0, 1\}^n$. Moreover, there exists an automorphism $\varphi$ of the hypercube, such that $\varphi(\psi(Q_t))=Q_t-t$.
		We define
		$$\widetilde{f}(x)=f(\psi(x)).$$
		Suppose that $f$ is locally $p$-biased function on the hypercube. Note that $\big.\widetilde{f}\big|_{Q_t}$ is locally $p$-biased on $Q_t$ for any $t\in\bZ^n$. 
		Let $x\in\bZ^n$. Consider $Q^+=Q_x$ and $Q^-=Q_{x-e}$, where $e=(1,\ldots,1)$.
		Then, $Q^+\cap Q^- = \{x\}$ and the neighbors of $x$ are partitioned such that half of them are in $Q^-$ and the other half are in $Q^+$.
		Since $\big.\widetilde{f}\big|_{Q_x^{\pm}}$ is locally $p$-biased, out of the $n$ neighbors of $x$ in $Q^{+}$ on exactly $pn$ the value of $\widetilde{f}$ is $1$, and the same holds true for $Q^-$. Thus, $\widetilde{f}$ is locally $p$-biased function on $\bZ^n$.\\
		Note that the automorphisms of $\bZ^n$ are those of the hypercube with the addition of translations.
		Since there exists an isomorphism between any two hypercubes in the tiling (the above mentioned $\varphi$), any automorphism between $\widetilde{f}$ and $\widetilde{g}$ would induce one between $f$ and $g$.
	\end{proof}
	
	The above extension procedure gives us lower bounds on the growth rate of some classes of non-isomorphic locally $p$-biased functions.
	
	\begin{cor}
		Let $\widetilde{B}_{p}^n$ be the class of non-isomorphic locally $p$-biased functions on $\bZ^n$. 
		\begin{enumerate}
			\item 
			If $n$ is even, then $\left|\widetilde{B}_{1/2}^n\right| \geq C2^{\sqrt{n}}/n^{1/4}$, where $C>0$ is a universal constant.
			\item
			If $n=2^{-m}$, then $\abs{\widetilde{B}_{1/n}^n}$ is super-exponential.
		\end{enumerate}
	\end{cor}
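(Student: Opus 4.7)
The plan is to deduce both bounds as direct pull-backs of the corresponding results for the hypercube via Proposition \ref{prop:embedding_to_zn}. That proposition supplies an injection $f \mapsto \widetilde{f}$ from locally $p$-biased functions on $\{-1,1\}^n$ to locally $p$-biased functions on $\bZ^n$, and moreover a pair $(f,g)$ of non-isomorphic hypercube functions maps to a non-isomorphic pair $(\widetilde{f}, \widetilde{g})$ on $\bZ^n$. Hence $\big|\widetilde{B}_p^n\big| \geq \big|B_p^n\big|$ for every permissible $p$, so it suffices to quote the previously established hypercube lower bounds.

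For part (1), we take $p = 1/2$. Since $n$ is even, Proposition \ref{prop:number_of_half_biased} provides a class of at least $C 2^{\sqrt{n}}/n^{1/4}$ mutually non-isomorphic locally $1/2$-biased functions on $\{-1,1\}^n$, constructed as tensor products of the building blocks $h_1, h_2, \ldots$ and the monomials $g_m$. Applying the embedding $f \mapsto \widetilde{f}$ of Proposition \ref{prop:embedding_to_zn} to each representative yields the same lower bound on $\bZ^n$.

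For part (2), we take $n = 2^k$ (presumably the intended form of the exponent in the statement), so that $p = 1/n$ is permissible by Theorem \ref{thm:main_theorem}. The earlier corollary based on Proposition \ref{prop:non_isomorphic_1_n_functions}, combined with the super-exponentially many pairwise non-isomorphic perfect codes on $\{-1,1\}^{n-1}$ from \cite{Krotov2008}, gives that $\big|B_{1/n}^n\big|$ is super-exponential in $n$. Pushing this family through the embedding gives a super-exponential lower bound for $\big|\widetilde{B}_{1/n}^n\big|$.

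There is essentially no obstacle here: all the hard work has been done in Propositions \ref{prop:embedding_to_zn}, \ref{prop:number_of_half_biased}, and \ref{prop:non_isomorphic_1_n_functions}, and the corollary is a two-line bookkeeping exercise. The only mild point to verify is that the class of non-isomorphic representatives on the hypercube is injected into distinct isomorphism classes on $\bZ^n$, which is exactly the second sentence of Proposition \ref{prop:embedding_to_zn}; once that is invoked, the two inequalities follow immediately from the corresponding hypercube bounds.
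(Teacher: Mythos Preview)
Your proposal is correct and matches the paper's approach exactly: the paper states this corollary immediately after Proposition~\ref{prop:embedding_to_zn} without a separate proof, since (as you note) it follows directly by pushing the hypercube families of Proposition~\ref{prop:number_of_half_biased} and the perfect-code corollary through the non-isomorphism-preserving embedding $f\mapsto\widetilde f$. Your observation that the exponent in part (2) is a typo for $n=2^{m}$ is also correct.
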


	Unlike for the hypercube, we do not have a characterization theorem for the lattice $\bZ^n$. In fact, we have found a locally $1/2$-biased function for $\bZ$ and a locally $1/4$-biased function for $\bZ^2$; see Figure \ref{fig:z_locally_biased}. Both of these are not the result of embedding the relevant hypercube in the lattice via Proposition \ref{prop:embedding_to_zn}. 
	
	\begin{que}
	Give a complete characterization of permissible $p$ values for locally $p$-biased functions on $\bZ^n$. When such functions exist, count how many there are.
	\end{que}

	\subsection*{Cayley Graphs}
	In general, for a given group with a natural generating set, it is interesting to ask whether its Cayley graph admits locally biased or locally stable functions, and if so, how many. Specific examples which spring to mind for such groups are the group of permutations $S_n$ with either all transpositions $\set{\sigma_{ij}}_{i<j}$, and $\bZ$ with any number of generators. For the latter case, the following observation shows that for any two generators, $\bZ$ has a locally $1/2$-biased function:
	
	\begin{observation}	
	Let $a > 1$ and $b > 1$ generate $\bZ$. Then the function $f$ defined by 
		$$ f(x) = \begin{cases}
		
		1, & 0 \leq (x \mod 2(a+b)) < a+b  \\
		-1, & a+b \leq (x \mod 2(a+b)) < 2(a+b)  \\
		\end{cases} $$	
	is locally $1/2$-biased.
	\end{observation}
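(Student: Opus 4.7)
My plan is to show that among the four neighbors $x+a, x-a, x+b, x-b$ of a vertex $x$, exactly two take the value $+1$ under $f$. The key observation is that $f$ is periodic with period $2N$, where $N:=a+b$, and that it takes the value $+1$ on the first half of each period and $-1$ on the second half. In particular, two integers $y_1, y_2$ satisfy $f(y_1)\neq f(y_2)$ whenever $y_1 \equiv y_2 + N \pmod{2N}$, because shifting by $N$ moves the residue from $[0,N)$ into $[N,2N)$ and vice versa.

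With this observation in hand, the proof becomes just a simple pairing of the four neighbors. I would pair $x+a$ with $x-b$, and $x-a$ with $x+b$. In the first pair, $(x+a)-(x-b) = a+b = N$, so by the observation above $f(x+a)\neq f(x-b)$, meaning exactly one of them equals $+1$. In the second pair, $(x-a)-(x+b) = -(a+b) \equiv N \pmod{2N}$, so again exactly one of $f(x-a), f(x+b)$ equals $+1$. Summing, exactly $2$ of the $4$ neighbors of $x$ are colored $+1$, which is the definition of being locally $1/2$-biased in a $4$-regular graph.

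There is essentially no obstacle here beyond the initial choice of how to pair the neighbors; the hypothesis that $a, b$ generate $\bZ$ is not really used for local $1/2$-biasedness itself (it is needed only to ensure that the Cayley graph is connected, so that the locally biased condition is a meaningful statement on a single connected graph rather than on each coset separately). Note that the argument also does not require $a\neq b$ or anything beyond $a, b \geq 1$, though as stated one should check that when $a=b$ the vertex $x$ still has $4$ distinct neighbors if we treat the generating set as a multiset, or merely $2$ neighbors otherwise; in the latter case the same pairing argument still gives the $1/2$ ratio.
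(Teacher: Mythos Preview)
Your proof is correct. The paper states this result as an observation without supplying a proof, so there is no argument to compare against; your pairing of $x+a$ with $x-b$ and of $x-a$ with $x+b$, together with the fact that a shift by $N=a+b$ swaps the two half-periods of $f$, is exactly the intended one-line verification. Your side remark about $a=b$ is moot here, since the hypotheses $a,b>1$ and $\gcd(a,b)=1$ already force $a\neq b$.
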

	
	Computer search shows that for some generators, other locally biased functions exist; see Figure \ref{fig:z_cayley_example} for an example.

	\begin{que}
		Characterize the locally biased and locally stable functions on $S_n$ as a function of its generating set.
	\end{que}
	
	\begin{que}
		Characterize the locally biased and locally stable functions on $\bZ$ as a function of its generating set.
	\end{que}

	\begin{figure}[h]
		\centering
		\includegraphics[scale = 0.5]{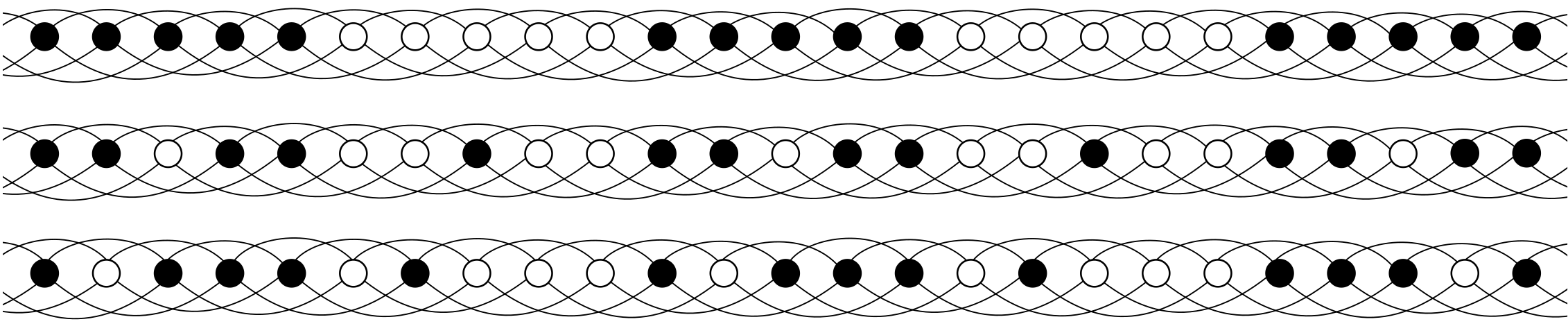}
		\caption{Three non-isomorphic locally $1/2$-biased functions for $\bZ$ with the generators $\set{2,3}$. Computer search shows that these are the only ones.}
		\label{fig:z_cayley_example}
	\end{figure}	

	\subsection{Locally biased and locally stable functions}
	Section \S~\ref{sec:non_unique} only gives lower bounds on the number of locally biased functions, and applies only for $p=1/2$ and $p = 1/n$ (and $1-1/n$ by taking negation of functions).
	
	\begin{que}
		What are the exact asymptotics for the number of non-isomorphic locally biased functions, for all permissible $p$?
	\end{que}
	
	We can also ask about the robustness of the locally biased property:
	
	\begin{que}
		How do the characterization and counting theorems for locally biased functions change, when we relax the locally biased demand for $2^{o(n)}$ of the vertices (i.e a small amount of vertices can have their neighbors labeled arbitrarily)?
	\end{que}

	The uniqueness of locally $1/n$-stable functions is in stark contrast to the exponential size of locally $1/n$-biased functions. Our bounds in section \S~\ref{sec:locally_stable_functions} for the number of $(n-m)/n$-locally stable functions are exponential in $m$, but not in $n$. We seek a better understanding of these functions:
	
	\begin{que}
		What are the exact asymptotics for the number of non-isomorphic locally stable functions?
	\end{que}

	%-----------------------------------------------------------------------
	
	\section{Acknowledgments}
	We thank Itai Benjamini for proposing the question of indistinguishability and for his advice, Ronen Eldan for his suggestions on locally stable functions, and David Ellis for the connection to perfect codes. We also thank Noga Alon and Peleg Michaeli for some useful discussions.
	
	%-----------------------------------------------------------------------
	\bibliography{locally_biased}
	\bibliographystyle{plain}

\end{document}